\newtheorem{theorem}{Theorem}
\theoremstyle{plain}
\newtheorem{corollary}{Corollary}
\newtheorem{proposition}{Proposition}
\numberwithin{equation}{section}
\begin{document}
\title[On Slant Magnetic Curves in $S$-manifolds]{On Slant Magnetic Curves
in $S$-manifolds}
\author{\c{S}ABAN G\"{U}VEN\c{C}}
\address[\c{S}. G\"{u}ven\c{c} and C. \"{O}zg\"{u}r]{Department of
Mathematics, Balikesir University, 10145, \c{C}a\u{g}\i \c{s}, Bal\i kesir,
TURKEY}
\email[\c{S}. G\"{u}ven\c{c}]{sguvenc@balikesir.edu.tr}
\author{C\.IHAN \"{O}ZG\"{U}R}
\email[C.~\"{O}zg\"{u}r]{cozgur@balikesir.edu.tr}
\date{}
\subjclass[2010]{53C25, 53C40, 53A04}
\keywords{Magnetic curve, slant curve, $S$-manifold}

\begin{abstract}
We consider slant normal magnetic curves in $(2n+1)$-dimensional $S$%
-manifolds. We prove that $\gamma $ is a slant normal magnetic curve in an $%
S $-manifold $(M^{2m+s},\varphi ,\xi _{\alpha },\eta ^{\alpha },g)$ if and
only if it belongs to a list of slant $\varphi $-curves satisfying some
special curvature equations. This list consists of some specific geodesics,
slant circles, Legendre and slant helices of order $3$. We construct slant
normal magnetic curves in $%
\mathbb{R}
^{2n+s}(-3s)$ and give the parametric equations of these curves.
\end{abstract}

\maketitle

\section{Introduction\label{introduction}}

\bigskip Let $(M,g)$ be a Riemannian manifold, $F$ a closed $2$-form and let
us denote the Lorentz force on $M$ by $\Phi $. \ If $F$ is associated by the
relation%
\begin{equation}
g(\Phi X,Y)=F(X,Y),\text{ \ }\forall X,Y\in \chi (M),  \label{F}
\end{equation}%
then it is called a\textit{\ magnetic field} (\cite{Adachi-1996}, \ \cite%
{BRCF} and \cite{Comtet-1987}). Let $\nabla $ be the Riemannian connection
associated to the metric $g$ and $\gamma :I\rightarrow M$ a smooth curve. If
$\gamma $ satisfies the Lorentz equation%
\begin{equation}
\nabla _{\gamma ^{\prime }(t)}\gamma ^{\prime }(t)=\Phi (\gamma ^{\prime
}(t)),  \label{Lorentz eq}
\end{equation}%
then it is called a \textit{magnetic curve} or a \textit{trajectory} for the
magnetic field $F$. The Lorentz equation is a generalization of the equation
for geodesics. A curve which satisfies the Lorentz equation is called
\textit{magnetic trajectory}. Magnetic trajectories have constant speed. If
the speed of the magnetic curve $\gamma $ is equal to $1$, then it is called
a \textit{normal magnetic curve }\cite{DIMN-2015}.

In \cite{Adachi-1996}, Adachi studied curvature bound and trajectories for
magnetic fields on a Hadamard surface. He showed that every normal
trajectory is unbounded in both directions in a $2$-dimensional complete
simply connected Riemannian manifold satisfying some special curvature
conditions. In \cite{CB-1994}, Baikoussis and Blair considered Legendre
curves in contact 3-manifolds and they proved that the torsion of a Legendre
curve in a 3-dimensional Sasakian manifold is equal to $1$. Moreover, in
\cite{CIL}, Cho, Inoguchi and Lee proved that a non-geodesic curve in a
Sasakian 3-manifold is a slant curve if and only if the ratio of $\left(
\tau \pm 1\right) $ and $\kappa $ is constant, where $\tau $ is the geodesic
torsion and $\kappa $ is the geodesic curvature. Cabrerizo, Fernandez and
Gomez gave a nice geometric construction of an almost contact metric
structure compatible with an assigned metric on a $3$-dimensional oriented
Riemannian manifold in \cite{CFG-2009}. In the paper \cite{DIMN-2015}, Dru%
\c{t}\u{a}-Romaniuc, Inoguchi, Munteanu and Nistor studied the magnetic
trajectories of the contact magnetic field $F_{q}=q%
\Omega
$ on a Sasakian $(2n+1)$-manifold $(M^{2n+1},\varphi ,\xi ,\eta ,g)$, where $%
\Omega
$ is the fundamental $2$-form. The main objective of \cite{DIMN-2016} is the
study of trajectories for particles moving under the influence of a contact
magnetic curve in a cosymplectic manifold. The paper \cite{IM-2017} is
concerned with closed magnetic trajectories on 3-dimensional Berger spheres.
In \cite{JMN-2015}, the authors studied magnetic trajectories in an almost
contact metric manifold. They proved that normal magnetic curves are helices
of maximum order $5$. Moreover, in \cite{JM-2015}, Jleli and Munteanu worked
in the context of a para-Kaehler manifold, showing that spacelike and
timelike normal magnetic curves corresponding to the para-Kaehler $2$-forms
are circles. In \cite{MN-2012}, the authors gave a complete classification
of Killing magnetic curves with unit speed. Furthermore, in \cite{MN-2014},
the same authors proved that a normal magnetic curve on the Sasakian sphere $%
S^{2n+1}$ lies on a totally geodesic sphere $S^{3}$. They also considered
two particular magnetic fields on three-dimensional torus obtained from two
different contact forms on the Euclidean space $E^{3}$ and studied their
closed normal magnetic trajectories in their recent paper \cite{MN-2017}. In
\cite{OGY-2015}, the authors investigated some special curves in $3$%
-dimensional semi-Riemannian manifolds, such as $T$-magnetic curves, $N$%
-magnetic curves and $B$-magnetic curves, that are defined by means of their
Frenet elements. Calvaruso, Munteanu and Perrone provided a complete
classification of the magnetic trajectories of a Killing characteristic
vector field on an arbitrary normal paracontact metric manifold of dimension
3 in \cite{CMP-2015}. The present authors considered biharmonic Legendre
curves of $S$-space forms in \cite{OG-2014}. The second author studied
magnetic curves in the $3$-dimensional Heisenberg group in \cite{Ozgur-2017}%
. In \cite{Nak-1966}, Nakagawa introduced the notion of framed $f$%
-structures, which is a generalization of almost contact structures. Vanzura
studied almost $r$-structures in \cite{Vanzura-1972}. A differentiable
manifold with this structure is the same as a framed $f$-manifold as defined
by Nakagawa. On the other hand, Hasegawa, Okuyama and Abe defined a $p$th
Sasakian manifold and gave some typical examples in \cite{Hasegawa}.

Motivated by the above studies, in the present paper, we consider slant
normal magnetic curves in $(2n+s)$-dimensional $S$-manifolds. In Section \ref%
{Preliminaries}, we give brief information on $S$-manifolds and magnetic
curves. In Section \ref{magnetic}, we prove that $\gamma $ is a slant normal
magnetic curve in an $S$-manifold $(M^{2m+s},\varphi ,\xi _{\alpha },\eta
^{\alpha },g)$ if and only if it belongs to a list of slant $\varphi $%
-curves. This list consists of some specific geodesics, slant circles,
Legendre and slant helices of order $3$. Finally, in Section \ref%
{construction}, we construct slant normal magnetic curves in $%
\mathbb{R}
^{2n+s}(-3s)$ and give the parametric equations of these curves in two cases.

\section{Preliminaries\label{Preliminaries}}

In this section, we give brief information on $S$-manifolds and magnetic
curves. Let $\left( M^{2n+s},g\right) $ be a differentiable manifold, $%
\varphi $ a $(1,1)$-type tensor field, $\eta ^{\alpha }$ 1-forms, $\xi
_{\alpha }$ vector fields for $\alpha =1,...,s$, satisfying

\begin{equation}
\varphi ^{2}=-I+\overset{s}{\underset{\alpha =1}{\sum }}\eta ^{\alpha
}\otimes \xi _{\alpha },  \label{fisquare}
\end{equation}%
\begin{equation*}
\eta ^{\alpha }\left( \xi _{\beta }\right) =\delta _{\beta }^{\alpha },\text{
}\varphi \xi _{\alpha }=0,\text{ }\eta ^{\alpha }\left( \varphi X\right) =0,%
\text{ }\eta ^{\alpha }\left( X\right) =g\left( X,\xi _{\alpha }\right) ,
\end{equation*}%
\begin{equation}
g(\varphi X,\varphi Y)=g(X,Y)-\overset{s}{\underset{\alpha =1}{\sum }}\eta
^{\alpha }(X)\eta ^{\alpha }(Y),  \label{eq2}
\end{equation}%
\begin{equation*}
d\eta ^{\alpha }\left( X,Y\right) =-d\eta ^{\alpha }\left( Y,X\right)
=g(X,\varphi Y),
\end{equation*}%
where $X,Y\in TM$. Then $(\varphi ,\xi _{\alpha },\eta ^{\alpha },g)$ is
called \textit{framed }$\varphi $\textit{-structure} and $(M^{2n+s},\varphi
,\xi _{\alpha },\eta ^{\alpha },g)$ is called \textit{framed }$\varphi $%
\textit{-manifold \cite{Nak-1966}. }$(M^{2n+s},\varphi ,\xi _{\alpha },\eta
^{\alpha },g)$ is also called \textit{framed metric manifold} \cite{YK-1984}
or \textit{almost r-contact metric manifold }\cite{Vanzura-1972}. If the
Nijenhuis tensor of $\varphi $ is equal to $-2d\eta ^{\alpha }\otimes \xi
_{\alpha }$, then $(M^{2n+s},\varphi ,\xi _{\alpha },\eta ^{\alpha },g)$ is
called an $S$\textit{-manifold \cite{Blair-1970}}. For $s=1$, an $S$%
-structure becomes a Sasakian structure. For an $S$-structure, the following
properties are satisfied \cite{Blair-1970}:

\begin{equation}
(\nabla _{X}\varphi )Y=\underset{\alpha =1}{\overset{s}{\sum }}\left\{
g(\varphi X,\varphi Y)\xi _{\alpha }+\eta ^{\alpha }(Y)\varphi ^{2}X\right\}
,  \label{nablaf}
\end{equation}%
\begin{equation}
\nabla \xi _{\alpha }=-\varphi ,\text{ }\alpha \in \left\{ 1,...,s\right\} .
\label{nablaxi}
\end{equation}

Let $M^{2n+s}=(M^{2n+s},\varphi ,\xi _{\alpha },\eta ^{\alpha },g)$ be an $S$%
-manifold and $\Omega $ \textit{the fundamental }$2$\textit{-form} of $%
M^{2n+s}$ defined by
\begin{equation}
\Omega (X,Y)=g(X,\varphi Y),  \label{omega}
\end{equation}%
(see \cite{Nak-1966} and \cite{Vanzura-1972}). From the definition of framed
$\varphi $-structure, we have $\Omega =d\eta ^{\alpha }$. Hence, the
fundamental 2-form $\Omega $ on M$^{2n+s}$ is closed. The \textit{magnetic
field} $F_{q}$ on $M^{2n+s}$ can be defined by
\begin{equation*}
F_{q}(X,Y)=q\Omega (X,Y),
\end{equation*}%
where $X$ and $Y$ are vector fields on $M^{2n+s}$ and $q$ is a real
constant. $F_{q}$ is called the \textit{contact magnetic field with strength}
$q$ \cite{JMN-2015}. If $q=0$ then the magnetic curves are geodesics of $%
M^{2n+s}$. Because of this reason we shall consider $q\neq 0$ (see \cite%
{CFG-2009} and \cite{DIMN-2015}).

From (\ref{F}) and (\ref{omega}), the Lorentz force $\Phi $ associated to
the contact magnetic field $F_{q}$ can be written as
\begin{equation*}
\Phi _{q}=-q\varphi .
\end{equation*}%
So the Lorentz equation (\ref{Lorentz eq}) can be written as%
\begin{equation}
\nabla _{T}T=-q\varphi T,  \label{magneticcurve}
\end{equation}%
where $\gamma :I\subseteq R\rightarrow M^{2n+s}$ is a smooth unit-speed
curve and $T=\gamma ^{\prime }$ (see \cite{DIMN-2015} and \cite{JMN-2015}).

\section{Slant magnetic curves in $S$-manifolds\label{magnetic}}

Let $\left( M^{n},g\right) $ be a Riemannian manifold. A unit-speed curve $%
\gamma :I\rightarrow M$ is said to be \textit{a Frenet curve of osculating
order }$r$, if there exists positive functions $\kappa _{1},...,\kappa
_{r-1} $ on $I$ satisfying
\begin{eqnarray}
T &=&v_{1}=\gamma ^{\prime },  \notag \\
\nabla _{T}T &=&k_{1}v_{2},  \notag \\
\nabla _{T}v_{2} &=&-k_{1}T+k_{2}v_{3},  \label{Frenet} \\
&&...  \notag \\
\nabla _{T}v_{r} &=&-k_{r-1}v_{r-1},  \notag
\end{eqnarray}%
where $1\leq r\leq n$ and $T,v_{2},...,v_{r}$ are a $g$-orthonormal vector
fields along the curve. The positive functions $\kappa _{1},...,\kappa
_{r-1} $ are called\textit{\ curvature functions }and $\left\{
T,v_{2},...,v_{r}\right\} $ is called the \textit{Frenet frame field}. A
\textit{geodesic} is a Frenet curve of osculating order $r=1.$ A \textit{%
circle} is a Frenet curve of osculating order $r=2$ with a constant
curvature function $\kappa _{1}$. A \textit{helix of order} $r$ is a Frenet
curve of osculating order $r$ with constant curvature functions $\kappa
_{1},...,\kappa _{r-1}$. A helix of order $3$ is simply called a \textit{%
helix}.

Let $(M^{2m+s},\varphi ,\xi _{\alpha },\eta ^{\alpha },g)$ be an $S$%
-manifold. For a unit-speed curve $\gamma :I\rightarrow M$, if
\begin{equation*}
\eta ^{\alpha }(T)=0,
\end{equation*}%
for all $\alpha =1,...,s$, then $\gamma $ is called a \textit{Legendre curve
of} $M$ \cite{OG-2014}. More generally, if there exists a constant angle $%
\theta $ such that%
\begin{equation*}
\eta ^{\alpha }(T)=\cos \theta ,
\end{equation*}%
for all $\alpha =1,...,s$, then $\gamma $ is called a \textit{slant curve }%
and $\theta $ is called the \textit{contact angle of }$\gamma $, where $%
\left\vert \cos \theta \right\vert $ $\leq 1/\sqrt{s}$ \cite{GO-2018}.

Let $(M^{2m+s},\varphi ,\xi _{\alpha },\eta ^{\alpha },g)$ be an $S$%
-manifold. A Frenet curve of osculating order $r\geq 3$ is called a $\varphi
$\textit{-curve in }$M$ if its Frenet vector fields $T,v_{2},...,v_{r}$ span
a $\varphi $-invariant space. A $\varphi $-curve of osculating order $r$
with constant curvature functions $\kappa _{1},...,\kappa _{r-1}$ is called
a $\varphi $\textit{-helix of order} $r$. A curve of osculating order 2 is
called a $\varphi $\textit{-curve }if
\begin{equation*}
sp\left\{ T,v_{2},\overset{s}{\underset{\alpha =1}{\sum }}\xi _{\alpha
}\right\}
\end{equation*}%
is a $\varphi $-invariant space.

Throughout \ the paper, when we state "slant magnetic curve", we mean "slant
curves which satisfy equation (\ref{magneticcurve})". For magnetic curves, $%
\eta ^{\alpha }(T)=\cos \theta _{\alpha }$ does not have to be equal for all
$\alpha =1,...,s.$ By taking the curve as slant, we only study the equality
case of the slant angles $\theta _{\alpha }$ in the present paper. The
complete classification of magnetic curves in $S$-manifolds is still an open
problem.

Firstly, we state the following theorem:

\begin{theorem}
\label{theorem 3.1} Let $(M^{2m+s},\varphi ,\xi _{\alpha },\eta ^{\alpha
},g) $ be an $S$-manifold and consider the contact magnetic field $F_{q}$
for $q\neq 0$. Then $\gamma $ is a slant normal magnetic curve associated to
$F_{q}$ in $M^{2m+s}$ if and only if $\gamma $ belongs to the following list:

$a)$ geodesics obtained as integral curves of $\left( \pm \frac{1}{\sqrt{s}}%
\underset{\alpha =1}{\overset{s}{\sum }}\xi _{\alpha }\right) $;

$b)$ non-geodesic slant circles with the curvature $\kappa _{1}=\sqrt{q^{2}-s%
}$, having the contact angle $\theta =\arccos \left( \frac{1}{q}\right) $
and the Frenet frame field%
\begin{equation*}
\left\{ T,\frac{-sgn(q)\varphi T}{\sqrt{1-s\cos ^{2}\theta }}\right\} ,
\end{equation*}%
where $\left\vert q\right\vert >\sqrt{s}$ ;

$c)$ Legendre helices with curvatures $\kappa _{1}=\left\vert q\right\vert $
and $\kappa _{2}=\sqrt{s}$, having the Frenet frame field%
\begin{equation*}
\left\{ T,-sgn(q)\varphi T,\frac{-sgn(q)}{\sqrt{s}}\overset{s}{\underset{%
\alpha =1}{\sum }}\xi _{\alpha }\right\} ;
\end{equation*}%
i.e., a class of 1-dimensional integral submanifolds of the contact
distribution;

$d)$ slant helices with curvatures $\kappa _{1}=\left\vert q\right\vert
\sqrt{1-s\cos ^{2}\theta }$ and $\kappa _{2}=\sqrt{s}\left\vert 1-q\cos
\theta \right\vert $, having the Frenet frame field%
\begin{equation*}
\left\{ T,\frac{-sgn(q)\varphi T}{\sqrt{1-s\cos ^{2}\theta }},\frac{%
-\varepsilon sgn(q)}{\sqrt{s}\sqrt{1-s\cos ^{2}\theta }}\left( -s\cos \theta
T+\overset{s}{\underset{\alpha =1}{\sum }}\xi _{\alpha }\right) \right\} ,
\end{equation*}%
where $\theta \neq \frac{\pi }{2}$ is the contact angle satisfying $%
\left\vert \cos \theta \right\vert <\frac{1}{\sqrt{s}}$ and $\varepsilon
=sgn(1-q\cos \theta )$.
\end{theorem}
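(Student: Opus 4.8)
The plan is to extract the Frenet data of a slant normal magnetic curve directly from the Lorentz equation (\ref{magneticcurve}), to close up the frame using the $S$-structure identities (\ref{fisquare}), (\ref{eq2}), (\ref{nablaf}), (\ref{nablaxi}), and then to split into cases according to which of $\varphi T$ and $\kappa_2$ vanishes and whether $\theta=\pi/2$. The converse direction will be a short verification that each listed curve satisfies (\ref{magneticcurve}).

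First I would record the elementary facts. From (\ref{omega}) together with $\Omega=d\eta^\alpha$ and the skew-symmetry of $d\eta^\alpha$, one gets that $\varphi$ is $g$-skew, so $g(T,\varphi T)=0$; differentiating $\eta^\alpha(T)=g(T,\xi_\alpha)$ along $\gamma$ and using (\ref{magneticcurve}), (\ref{nablaxi}) and $\eta^\alpha(\varphi X)=0$ then yields $\frac{d}{dt}\eta^\alpha(T)=0$, so the slant value $\eta^\alpha(T)=\cos\theta$ is indeed constant. Next, (\ref{eq2}) gives $|\varphi T|^2=1-s\cos^2\theta$, whence (\ref{magneticcurve}) forces $\kappa_1=|q|\sqrt{1-s\cos^2\theta}$. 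The first dichotomy is whether $\varphi T=0$. If $\varphi T=0$, then $\nabla_T T=0$ and $\gamma$ is a geodesic; applying $\varphi^2$ as in (\ref{fisquare}) forces $T=\cos\theta\sum_\alpha\xi_\alpha$, and the unit-speed condition gives $s\cos^2\theta=1$, i.e. $\cos\theta=\pm1/\sqrt{s}$, which is exactly case $a)$. If $\varphi T\neq0$, then $\kappa_1>0$ and $v_2=\frac{-\mathrm{sgn}(q)\,\varphi T}{\sqrt{1-s\cos^2\theta}}$, matching $b)$, $c)$, $d)$.

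To continue the frame I would compute $\nabla_T v_2=\frac{-\mathrm{sgn}(q)}{\sqrt{1-s\cos^2\theta}}\big((\nabla_T\varphi)T-q\varphi^2 T\big)$, the crux being $(\nabla_T\varphi)T$. Using (\ref{nablaf}) with $g(\varphi T,\varphi T)=1-s\cos^2\theta$ and $\varphi^2 T=-T+\cos\theta\sum_\alpha\xi_\alpha$ from (\ref{fisquare}), a direct calculation collapses to the clean identity $(\nabla_T\varphi)T=\sum_\alpha\xi_\alpha-s\cos\theta\,T$. Reading off $\kappa_2 v_3=\nabla_T v_2+\kappa_1 T$ from the Frenet relations (\ref{Frenet}), one finds $\kappa_2 v_3$ is a scalar multiple of the single vector $-s\cos\theta\,T+\sum_\alpha\xi_\alpha$; taking its norm via (\ref{eq2}) gives $|{-s\cos\theta\,T+\sum_\alpha\xi_\alpha}|=\sqrt{s}\sqrt{1-s\cos^2\theta}$, hence $\kappa_2=\sqrt{s}\,|1-q\cos\theta|$ and the stated $v_3$ with $\varepsilon=\mathrm{sgn}(1-q\cos\theta)$. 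The remaining case-split is then immediate: if $\kappa_2=0$, i.e. $q\cos\theta=1$, the curve is a circle with $\kappa_1=\sqrt{q^2-s}$ (forcing $|q|>\sqrt{s}$), giving $b)$; if $\theta=\pi/2$ then $\kappa_1=|q|$ and $\kappa_2=\sqrt{s}$, giving the Legendre helix $c)$; otherwise $|\cos\theta|<1/\sqrt{s}$ with $\theta\neq\pi/2$ gives the slant helix $d)$. In the order-$3$ cases I would verify $\nabla_T v_3=-\kappa_2 v_2$ to confirm $\kappa_3=0$, so these are genuine helices of order $3$. For the converse, case $a)$ follows since $\varphi T=0$ makes $\nabla_T T=0=-q\varphi T$, while in $b)$--$d)$ substituting the given $\kappa_1$ and $v_2$ into $\nabla_T T=\kappa_1 v_2$ reproduces $-q\varphi T$.

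I would expect the main obstacle to be the bookkeeping in $\nabla_T v_2$: one must apply (\ref{nablaf}) term by term, insert the slant value $\eta^\alpha(T)=\cos\theta$ everywhere, and then recognize the output as a multiple of $-s\cos\theta\,T+\sum_\alpha\xi_\alpha$ so that $\kappa_2$ and $v_3$ come out in the stated closed form; confirming the closure $\kappa_3=0$ at order $3$ is the other point requiring care.
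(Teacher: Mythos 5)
Your proposal is correct and follows essentially the same route as the paper: constancy of $\eta ^{\alpha }(T)$ from the Lorentz equation, $\kappa _{1}=\left\vert q\right\vert \sqrt{1-s\cos ^{2}\theta }$ with $v_{2}\parallel \varphi T$, then (\ref{nablaf}) to identify $\kappa _{2}v_{3}$ as a multiple of $-s\cos \theta \,T+\sum_{\alpha =1}^{s}\xi _{\alpha }$, giving $\kappa _{2}=\sqrt{s}\left\vert 1-q\cos \theta \right\vert $, the case split into $a)$--$d)$, the verification $\kappa _{3}=0$, and the short converse. The only cosmetic difference is that you read off the circle case $\cos \theta =1/q$ from the vanishing of the coefficient $1-q\cos \theta $ in the general formula, whereas the paper handles the order-$2$ case by a separate differentiation of $\eta ^{\alpha }(v_{2})=0$; both amount to the same computation.
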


\begin{proof}
Let $\gamma $ be a normal magnetic curve. If the magnetic curve is a
geodesic, then
\begin{equation*}
\nabla _{T}T=0=-q\varphi T
\end{equation*}%
gives us%
\begin{equation*}
T\in sp\left\{ \xi _{1},...,\xi _{s}\right\} .
\end{equation*}%
If $\gamma $ is slant, then we can write%
\begin{equation*}
T=\cos \theta \underset{\alpha =1}{\overset{s}{\sum }}\xi _{\alpha }.
\end{equation*}%
Since $\gamma $ is unit speed, we have $\cos \theta =\pm \frac{1}{\sqrt{s}}$%
. So the proof of a) is complete.

From now on, we suppose that $\gamma $ is a non-geodesic Frenet curve of
osculating order $r>1$. Let us choose an $\alpha \in \left\{ 1,...,s\right\}
$. Applying $\xi _{\alpha }$ to $\nabla _{T}T=-q\varphi T$, we obtain%
\begin{equation}
0=g\left( -q\varphi T,\xi _{\alpha }\right) =g\left( \nabla _{T}T,\xi
_{\alpha }\right) =\frac{d}{dt}g\left( T,\xi _{\alpha }\right) -g\left(
T,\nabla _{T}\xi _{\alpha }\right) .  \label{equaaa1}
\end{equation}%
From (\ref{nablaxi}), we also have%
\begin{equation}
\nabla _{T}\xi _{\alpha }=-\varphi T.  \label{nablaksi}
\end{equation}%
Using equations (\ref{equaaa1}) and (\ref{nablaksi}), we find
\begin{equation*}
\frac{d}{dt}g\left( T,\xi _{\alpha }\right) =0,
\end{equation*}%
that is,%
\begin{equation*}
\eta ^{\alpha }\left( T\right) =\cos \theta _{\alpha }=constant.
\end{equation*}%
Let us assume $\theta _{\alpha }=\theta $ for all $\alpha =1,...,s$, i.e., $%
\gamma $ is slant. So, we have
\begin{equation}
\eta ^{\alpha }\left( T\right) =\cos \theta .  \label{etaalfaT3.4}
\end{equation}%
Equations (\ref{magneticcurve}) and (\ref{Frenet}) give us%
\begin{equation}
\nabla _{T}T=\kappa _{1}v_{2}=-q\varphi T.  \label{eq3.5}
\end{equation}%
Then we get%
\begin{equation}
\kappa _{1}=\left\vert q\right\vert \left\Vert \varphi T\right\Vert
=\left\vert q\right\vert \sqrt{1-s\cos ^{2}\theta }.  \label{eq3.6}
\end{equation}%
If we write (\ref{eq3.6}) in (\ref{eq3.5}), we find%
\begin{equation*}
-q\varphi T=\kappa _{1}v_{2}=\left\vert q\right\vert \sqrt{1-s\cos
^{2}\theta }v_{2},
\end{equation*}%
which gives us%
\begin{equation}
\varphi T=-\frac{\left\vert q\right\vert }{q}\sqrt{1-s\cos ^{2}\theta }%
v_{2}=-sgn(q)\sqrt{1-s\cos ^{2}\theta }v_{2}.  \label{eq3.7}
\end{equation}%
If $\kappa _{2}=0$, then the magnetic curve is a Frenet curve of osculating
order $r=2$. Since $\kappa _{1}$ is a constant, $\gamma $ is a circle. From (%
\ref{eq3.7}), we have%
\begin{equation*}
\eta ^{\alpha }\left( \varphi T\right) =0=-sgn(q)\sqrt{1-s\cos ^{2}\theta }%
\eta ^{\alpha }\left( v_{2}\right) \text{,}
\end{equation*}%
that is,%
\begin{equation*}
\eta ^{\alpha }\left( v_{2}\right) =0.
\end{equation*}%
If we differentiate the last equation along the curve $\gamma $, we obtain%
\begin{equation*}
\nabla _{T}\eta ^{\alpha }\left( v_{2}\right) =0=g\left( \nabla
_{T}v_{2},\xi _{\alpha }\right) +g\left( v_{2},\nabla _{T}\xi _{\alpha
}\right) .
\end{equation*}%
So, we calculate%
\begin{equation*}
g\left( -\kappa _{1}T,\xi _{\alpha }\right) +g\left( v_{2},sgn(q)\sqrt{%
1-s\cos ^{2}\theta }v_{2}\right) =0.
\end{equation*}%
Since $r=2$, we find%
\begin{equation*}
-\kappa _{1}\cos \theta +sgn(q)\sqrt{1-s\cos ^{2}\theta }=0.
\end{equation*}%
Using equation (\ref{eq3.6}) in the last equation, it is easy to see that%
\begin{equation*}
\left\vert q\right\vert \sqrt{1-s\cos ^{2}\theta }\left( -\cos \theta +\frac{%
1}{q}\right) =0.
\end{equation*}%
Since $\gamma $ is non-geodesic, we have%
\begin{equation*}
\cos \theta =\frac{1}{q}.
\end{equation*}%
Then equation (\ref{eq3.6}) becomes%
\begin{equation*}
\kappa _{1}=\left\vert q\right\vert \sqrt{1-s\cos ^{2}\theta }=\sqrt{q^{2}-s}%
,
\end{equation*}%
where $\left\vert q\right\vert >\sqrt{s}$. So the proof of b) is complete.

Let $\kappa _{2}\neq 0$. From (\ref{fisquare}) and (\ref{etaalfaT3.4}), we
find%
\begin{equation}
\varphi ^{2}T=-T+\cos \theta \underset{\alpha =1}{\overset{s}{\sum }}\xi
_{\alpha }.  \label{fikareT}
\end{equation}%
Using (\ref{nablaf}) and (\ref{etaalfaT3.4}), we have%
\begin{equation}
(\nabla _{T}\varphi )T=-s\cos \theta T+\underset{\alpha =1}{\overset{s}{\sum
}}\xi _{\alpha },  \label{eqaa}
\end{equation}%
which gives us%
\begin{eqnarray}
\nabla _{T}\varphi T &=&(\nabla _{T}\varphi )T+\varphi \nabla _{T}T  \notag
\\
&=&-s\cos \theta T+\underset{\alpha =1}{\overset{s}{\sum }}\xi _{\alpha
}+\varphi (-q\varphi T)  \notag \\
&=&-s\cos \theta T+\underset{\alpha =1}{\overset{s}{\sum }}\xi _{\alpha
}-q\left( -T+\cos \theta \underset{\alpha =1}{\overset{s}{\sum }}\xi
_{\alpha }\right) .  \label{star1}
\end{eqnarray}%
Differentiating (\ref{eq3.7}), we also find%
\begin{equation}
\nabla _{T}\varphi T=-sqn(q)\sqrt{1-s\cos ^{2}\theta }\left( -\kappa
_{1}T+\kappa _{2}v_{3}\right) .  \label{star2}
\end{equation}%
By the use of (\ref{eq3.6}), (\ref{star1}) and (\ref{star2}), after some
calculations, we obtain%
\begin{equation}
(1-q\cos \theta )\left( -s\cos \theta T+\underset{\alpha =1}{\overset{s}{%
\sum }}\xi _{\alpha }\right) =-sgn(q)\sqrt{1-s\cos ^{2}\theta }\kappa
_{2}v_{3}.  \label{eqbb}
\end{equation}%
If we find the norm of both sides in (\ref{eqbb}), we get%
\begin{equation}
\kappa _{2}=\sqrt{s}\left\vert 1-q\cos \theta \right\vert .  \label{kappa2}
\end{equation}%
Let us denote $\varepsilon =sgn(1-q\cos \theta )$. If we write (\ref{kappa2}%
) in (\ref{eqbb}), we obtain%
\begin{equation}
\underset{\alpha =1}{\overset{s}{\sum }}\xi _{\alpha }=s\cos \theta
T-\varepsilon sgn(q)\sqrt{s}\sqrt{1-s\cos ^{2}\theta }v_{3}.  \label{eq3.9}
\end{equation}%
Applying $\varphi $ to (\ref{eq3.9}), we find%
\begin{equation*}
\varphi v_{3}=-\varepsilon \sqrt{s}\cos \theta v_{2}.
\end{equation*}%
If we apply $\varphi $ to (\ref{eq3.7}) and then use equations (\ref%
{etaalfaT3.4}) and (\ref{eq3.9}) together, we have%
\begin{equation}
\varphi v_{2}=sgn(q)\sqrt{1-s\cos ^{2}\theta }T+\varepsilon \cos \theta
\sqrt{s}v_{3}.  \label{eq3.10}
\end{equation}%
Let us choose a $\beta \in \left\{ 1,...,s\right\} .$ From (\ref{eq3.10}),
we calculate%
\begin{equation*}
\eta ^{\beta }(v_{3})=-\varepsilon sgn(q)\frac{\sqrt{1-s\cos ^{2}\theta }}{%
\sqrt{s}}.
\end{equation*}%
If we differentiate (\ref{eq3.9}) along the curve $\gamma $, we get%
\begin{equation*}
\underset{\alpha =1}{\overset{s}{\sum }}\nabla _{T}\xi _{\alpha }=s\cos
\theta \nabla _{T}T-\varepsilon sgn(q)\sqrt{s}\sqrt{1-s\cos ^{2}\theta }%
\nabla _{T}v_{3},
\end{equation*}%
which gives us%
\begin{equation*}
-s\left( 1-q\cos \theta \right) \varphi T=-\varepsilon sgn(q)\sqrt{s}\sqrt{%
1-s\cos ^{2}\theta }\left( -\kappa _{2}v_{2}+\kappa _{3}v_{4}\right) .
\end{equation*}%
Since $\varphi T\parallel v_{2}$, we find $\kappa _{3}=0$. This proves d) of
the theorem.

Let us examine Legendre case separately, that is, $\theta =\frac{\pi }{2}$.
Then we have $\varepsilon =1$, $\kappa _{1}=\left\vert q\right\vert $, $%
\kappa _{2}=\sqrt{s}$, $\kappa _{3}=0$ and equation (\ref{eq3.9}) gives us%
\begin{equation*}
v_{3}=\frac{-sgn(q)}{\sqrt{s}}\underset{\alpha =1}{\overset{s}{\sum }}\xi
_{\alpha }.
\end{equation*}%
This completes the proof of c).

Conversely, let $\gamma $ satisfy one of $a)$, $b)$, $c)$ or $d)$. Using the
Frenet frame field and Frenet equations, it is straightforward to show that $%
\nabla _{T}T=-q\varphi T$, i.e., $\gamma $ is a slant normal magnetic curve.
\end{proof}

The above theorem is a generalization of Theorem 3.1 of \cite{DIMN-2015} (by
Simona Luiza Druta-Romaniuc et al.) for $S$-manifolds. If we choose $s=1$,
since an $S$-manifold becomes a Sasakian manifold, we find their results.

\textbf{Remark.} The order of a slant magnetic curve in an $S$-manifold is
still $r\leq 3$, as in the case of a magnetic curve of a Sasakian manifold,
which was considered in \cite{DIMN-2015}.

Now, let us remove the slant condition from the hypothesis and show that the
osculating order is still $r\leq 3.$

\begin{theorem}
\label{order} Let $(M^{2m+s},\varphi ,\xi _{\alpha },\eta ^{\alpha },g)$ be
an $S$-manifold and consider the contact magnetic field $F_{q}$ for $q\neq 0$%
. If $\gamma $ is a normal magnetic curve associated to $F_{q}$ in $M^{2m+s}$%
, then the osculating order $r\leq 3$.
\end{theorem}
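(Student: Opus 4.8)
The plan is to mimic the analysis of Theorem \ref{theorem 3.1} but without assuming that the slant angles $\theta_\alpha$ coincide. First I would dispose of the geodesic case, where $r=1$ trivially. So assume $\gamma$ is a non-geodesic Frenet curve with $\nabla_T T = -q\varphi T$. The very first step, exactly as in the slant case, is to apply $\xi_\alpha$ to the magnetic equation and use $\nabla_T\xi_\alpha = -\varphi T$ (equation (\ref{nablaxi})); since $g(-q\varphi T,\xi_\alpha)=0$, this yields $\frac{d}{dt}\eta^\alpha(T)=0$, so each $\eta^\alpha(T)=\cos\theta_\alpha$ is a constant, even though these constants need not be equal. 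The key observation is that the only place the slant hypothesis was used in Theorem \ref{theorem 3.1} was to write $\cos\theta_\alpha=\cos\theta$; the constancy of each $\eta^\alpha(T)$ is automatic.

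Next I would compute $\kappa_1$ from $\nabla_T T=\kappa_1 v_2=-q\varphi T$, giving $\kappa_1=|q|\,\|\varphi T\|$ where now $\|\varphi T\|^2 = 1-\sum_\alpha\cos^2\theta_\alpha$ by (\ref{eq2}). Then $v_2$ is a constant multiple of $\varphi T$. If $\kappa_2=0$ we are already at order $r=2$ and done. Otherwise I would differentiate $v_2$ (equivalently $\varphi T$) along $\gamma$ to identify $v_3$. Using (\ref{nablaf}) one computes $(\nabla_T\varphi)T = \sum_\alpha\{\|\varphi T\|^2\xi_\alpha + \eta^\alpha(T)\varphi^2 T\}$, which combined with $\varphi\nabla_T T=-q\varphi^2 T$ gives $\nabla_T\varphi T$ as an explicit combination of $T$ and $\sum_\alpha \xi_\alpha$ (note $\varphi^2 T=-T+\sum_\alpha\cos\theta_\alpha\,\xi_\alpha$). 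Comparing with the Frenet formula $\nabla_T v_2=-\kappa_1 T+\kappa_2 v_3$ expresses $\kappa_2 v_3$ as a vector lying in $\mathrm{span}\{T,\xi_1,\dots,\xi_s\}$; in fact, after subtracting the $T$-component, $v_3$ is proportional to $\big(\sum_\alpha\xi_\alpha\big)$ minus its $T$-component, i.e. $v_3\in\mathrm{span}\{T,\sum_\alpha\xi_\alpha\}$.

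The crucial final step is to show $\kappa_3=0$. Having pinned down $v_3$ as a fixed linear combination of $T$ and $\sum_\alpha\xi_\alpha$ with \emph{constant} coefficients (the coefficients involve only the constants $\cos\theta_\alpha$ and $q$), I would differentiate this expression along $\gamma$. On the right, $\nabla_T T=-q\varphi T$ and $\sum_\alpha\nabla_T\xi_\alpha=-s\varphi T$ are both parallel to $\varphi T$, hence parallel to $v_2$. On the left, the Frenet equation gives $\nabla_T v_3=-\kappa_2 v_2+\kappa_3 v_4$. Matching the two forces the $v_4$-component to vanish, so $\kappa_3=0$ and the osculating order is at most $3$. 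I expect this last matching to be the main obstacle only in the bookkeeping sense: one must verify that the derivative of $v_3$ produces no component orthogonal to $\mathrm{span}\{T,v_2\}$, which hinges on the fact that both $\nabla_T T$ and $\nabla_T\big(\sum_\alpha\xi_\alpha\big)$ are multiples of $\varphi T\parallel v_2$. The slant assumption never enters this argument, so the conclusion $r\leq 3$ holds in full generality.
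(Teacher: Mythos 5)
Your overall strategy coincides with the paper's: show each $\eta ^{\alpha }(T)=\cos \theta _{\alpha }$ is constant, identify $v_{2}$ as a constant multiple of $\varphi T$, compare $\nabla _{T}\varphi T$ (computed from (\ref{nablaf})) with the Frenet equation to locate $v_{3}$, and then differentiate once more, using that $\nabla _{T}\xi _{\alpha }=-\varphi T\parallel v_{2}$, to force $\kappa _{3}=0$. This is exactly the paper's proof.

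However, one intermediate claim in your write-up is false as stated: you assert that, after removing the $T$-component, $v_{3}$ is proportional to $\sum_{\alpha }\xi _{\alpha }$ minus its $T$-component, i.e. $v_{3}\in \mathrm{span}\left\{ T,\sum_{\alpha }\xi _{\alpha }\right\} $. Carrying out the comparison you describe yields (this is the paper's equation (\ref{eqx}))
\begin{equation*}
-\mathrm{sgn}(q)\sqrt{1-A}\,\kappa _{2}v_{3}=(qA-B)T+\sum_{\alpha =1}^{s}
\bigl[ (1-A)+(B-q)\cos \theta _{\alpha }\bigr] \xi _{\alpha },
\end{equation*}
where $A=\sum_{\alpha }\cos ^{2}\theta _{\alpha }$ and $B=\sum_{\alpha }\cos \theta _{\alpha }$. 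The coefficient of $\xi _{\alpha }$ depends on $\alpha $ through $\cos \theta _{\alpha }$, because the term $(B-q)V$ with $V=\sum_{\alpha }\cos \theta _{\alpha }\xi _{\alpha }$ is \emph{not} proportional to $\sum_{\alpha }\xi _{\alpha }$ unless all the $\cos \theta _{\alpha }$ coincide --- which is precisely the slant hypothesis you are trying to drop. So in general $v_{3}$ lies in $\mathrm{span}\left\{ T,\xi _{1},...,\xi _{s}\right\} $ but not in $\mathrm{span}\left\{ T,\sum_{\alpha }\xi _{\alpha }\right\} $. Fortunately this slip does not damage your final step: what the argument actually needs is only that $v_{3}=c_{0}T+\sum_{\alpha }c_{\alpha }\xi _{\alpha }$ with \emph{constant} coefficients (which holds because the Gram matrix of $\left\{ T,\xi _{1},...,\xi _{s}\right\} $ is constant along $\gamma $, so the right-hand side above has constant norm and $\kappa _{2}$ is constant), together with the fact that each $\nabla _{T}\xi _{\alpha }=-\varphi T$ is \emph{individually} parallel to $v_{2}$, not merely their sum. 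With the span corrected in this way, your differentiation gives $\nabla _{T}v_{3}\parallel v_{2}$, hence $\kappa _{3}=0$, exactly as in the paper's equations (\ref{eq3}) and following.
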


\begin{proof}
Let $\gamma $ be a normal magnetic curve. Then, the Lorentz equation (\ref%
{magneticcurve}) gives us%
\begin{equation*}
\eta ^{\alpha }(T)=\cos \theta _{\alpha }\text{, }\alpha =1,...,s.
\end{equation*}%
If we differentiate this equation along the curve, we have
\begin{equation*}
\eta ^{\alpha }(E_{2})=0
\end{equation*}%
for all $\alpha =1,...,s.$ From the Frenet equations (\ref{Frenet}), we
obtain%
\begin{equation*}
-q\varphi T=\kappa _{1}v_{2}.
\end{equation*}%
From the definition of framed $\varphi $-structure, we calculate%
\begin{equation*}
g(\varphi T,\varphi T)=1-A,
\end{equation*}%
where we denote
\begin{equation*}
A=\overset{s}{\underset{\alpha =1}{\sum }}\cos ^{2}\theta _{\alpha }.
\end{equation*}%
Then, we have%
\begin{equation*}
\left\Vert \varphi T\right\Vert =\sqrt{1-A}
\end{equation*}%
and%
\begin{equation*}
\kappa _{1}=\left\vert q\right\vert \sqrt{1-A}.
\end{equation*}%
Thus, $\varphi T$ can be rewritten as%
\begin{equation}
\varphi T=-sgn(q)\sqrt{1-A}v_{2}.  \label{eq}
\end{equation}%
Again, from the definition of framed $\varphi $-structure, we have%
\begin{equation*}
\varphi ^{2}T=-T+V,
\end{equation*}%
where we denote%
\begin{equation*}
V=\overset{s}{\underset{\alpha =1}{\sum }}\cos \theta _{\alpha }\xi _{\alpha
}.
\end{equation*}%
After some calculations, we get%
\begin{eqnarray*}
\nabla _{T}\varphi T &=&(q-B)T+(1-A)\overset{s}{\underset{\alpha =1}{\sum }}%
\xi _{\alpha } \\
&&+(-q+B)V,
\end{eqnarray*}%
which corresponds to equation (\ref{star1}). Here, we denote%
\begin{equation*}
B=\overset{s}{\underset{\alpha =1}{\sum }}\cos \theta _{\alpha }.
\end{equation*}%
From equation (\ref{eq}), we also find%
\begin{equation*}
\nabla _{T}\varphi T=-sqn(q)\sqrt{1-A}(-\kappa _{1}T+\kappa _{2}v_{3}),
\end{equation*}%
which corresponds to equation (\ref{star2}). In this last equation, we can
replace $\kappa _{1}=\left\vert q\right\vert \sqrt{1-A}$. Finally, we have%
\begin{eqnarray}
-sqn(q)\sqrt{1-A}\kappa _{2}v_{3} &=&\left( 1-A\right) \overset{s}{\underset{%
\alpha =1}{\sum }}\xi _{\alpha }+\left( -q+B\right) V  \label{eqx} \\
&&+\left( qA-B\right) T.  \notag
\end{eqnarray}%
So, if we denote the norm of the right hand side of equation (\ref{eqx}) by $%
C$, we find%
\begin{equation*}
C=\sqrt{(1-A)(Aq^{2}-As+B^{2}-2Bq+s)},
\end{equation*}%
which is a constant. Hence, we obtain%
\begin{equation*}
\kappa _{2}=\frac{C}{\sqrt{1-A}}=\sqrt{Aq^{2}-As+B^{2}-2Bq+s}=\text{constant}%
.
\end{equation*}%
From equation (\ref{eqx}), we also have $v_{3}\in span\left\{ T,\xi
_{1},...,\xi _{s}\right\} .$ The angles between $v_{3}$ and $T,\xi
_{1},...,\xi _{s}$ are all constants since all the coefficients in equation (%
\ref{eqx}) are constants. Then, we can write%
\begin{equation}
v_{3}=c_{0}T+c_{1}\xi _{1}+...+c_{s}\xi _{s}  \label{eq3}
\end{equation}%
for some constants $c_{0},...,c_{s}.$ If we differentiate equation (\ref{eq3}%
), we get%
\begin{equation*}
-\kappa _{2}v_{2}+\kappa _{3}v_{4}=c_{0}\kappa _{1}v_{2}-c_{1}\varphi
T-...-c_{s}\varphi T.
\end{equation*}%
Since $\varphi T$ is parallel to $v_{2}$, if we take the inner product of
the last equation with $v_{4}$, we find $\kappa _{3}=0$. This proves the
theorem.
\end{proof}

In particular, if $\gamma $ is slant, i.e. $\theta _{\alpha }=\theta $ for
all $\alpha =1,...,s,$ then we obtain the following corollary:

\begin{corollary}
If $\theta _{\alpha }=\theta ,$ for all $\alpha =1,...,s,$ then%
\begin{equation*}
A=s\cos ^{2}\theta ,\text{ }B=s\cos \theta ,\text{ }V=\cos \theta \overset{s}%
{\underset{\alpha =1}{\sum }}\xi _{\alpha },
\end{equation*}%
\begin{equation*}
C=\sqrt{(1-s\cos ^{2}\theta )s(1-q\cos \theta )^{2}}
\end{equation*}%
and $\kappa _{2}=\sqrt{s}\left\vert 1-q\cos \theta \right\vert .$
\end{corollary}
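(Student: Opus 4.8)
The plan is to specialize the general formulas established in the proof of Theorem \ref{order} to the slant case $\theta_\alpha = \theta$ for every $\alpha = 1,\dots,s$. The quantities $A$, $B$, $V$, $C$ and $\kappa_2$ were all introduced there as sums over $\alpha$, so the first three identities follow by direct substitution: setting $\cos\theta_\alpha = \cos\theta$ in $A = \sum_\alpha \cos^2\theta_\alpha$, in $B = \sum_\alpha \cos\theta_\alpha$, and in $V = \sum_\alpha \cos\theta_\alpha\,\xi_\alpha$ makes each summand independent of $\alpha$, giving $A = s\cos^2\theta$, $B = s\cos\theta$, and $V = \cos\theta\sum_\alpha \xi_\alpha$ at once.

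For the last two identities I would substitute $A = s\cos^2\theta$ and $B = s\cos\theta$ into the radicand $Aq^2 - As + B^2 - 2Bq + s$ that appears in Theorem \ref{order}. The key observation — and essentially the only computational point — is that the terms $-As = -s^2\cos^2\theta$ and $B^2 = s^2\cos^2\theta$ cancel exactly, so the radicand collapses to $s q^2\cos^2\theta - 2sq\cos\theta + s = s(q\cos\theta - 1)^2$, a perfect square. This immediately yields $\kappa_2 = \sqrt{s(1-q\cos\theta)^2} = \sqrt{s}\,\lvert 1-q\cos\theta\rvert$. Inserting the same computation into $C = \sqrt{(1-A)(Aq^2 - As + B^2 - 2Bq + s)}$, together with $1 - A = 1 - s\cos^2\theta$, gives $C = \sqrt{(1-s\cos^2\theta)\,s\,(1-q\cos\theta)^2}$, as claimed.

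I do not anticipate any genuine obstacle, since the statement is a corollary obtained purely by substitution into already-proved formulas; the perfect-square collapse noted above is what produces the clean closed forms. The one point deserving care is the sign taken when extracting the square root for $\kappa_2$: because $\kappa_2$ is a curvature function it must be nonnegative, which is why the absolute value $\lvert 1-q\cos\theta\rvert$ appears rather than $1 - q\cos\theta$. This is consistent with the expression for $\kappa_2$ obtained independently in part $d)$ of Theorem \ref{theorem 3.1}.
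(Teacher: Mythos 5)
Your proposal is correct and takes exactly the approach the paper intends: the corollary is stated there without explicit proof, precisely because it is the immediate specialization of the quantities $A$, $B$, $V$, $C$ and $\kappa_2$ from the proof of Theorem \ref{order} to $\theta_\alpha=\theta$. Your algebra checks out, including the key cancellation $-As+B^2 = -s^2\cos^2\theta + s^2\cos^2\theta = 0$ that collapses the radicand to $s(1-q\cos\theta)^2$, and the absolute value in $\kappa_2=\sqrt{s}\left\vert 1-q\cos\theta\right\vert$ is handled correctly.
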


Now, let us state the following proposition:

\begin{proposition}
\label{PROPSLANT}Let $\gamma $ be a slant $\varphi $-helix of order 3 in an $%
S$-manifold $(M^{2m+s},\varphi ,\xi _{\alpha },\eta ^{\alpha },g)$ with
contact angle $\theta $. Then
\begin{equation}
\underset{\alpha =1}{\overset{s}{\sum }}\xi _{\alpha }=s\cos \theta T+\rho
v_{3},  \label{sumksialfa}
\end{equation}%
where $\rho =g\left( v_{3},\underset{\alpha =1}{\overset{s}{\sum }}\xi
_{\alpha }\right) =s\eta ^{\alpha }\left( v_{3}\right) $ is a real constant
such that $\rho ^{2}=s-s^{2}\cos ^{2}\theta $. Hence, $\gamma $ has the
Frenet frame field%
\begin{equation*}
\left\{ T,\frac{\pm \varphi T}{\sqrt{1-s\cos ^{2}\theta }},\frac{\pm 1}{%
\sqrt{s}\sqrt{1-s\cos ^{2}\theta }}\left( -s\cos \theta T+\overset{s}{%
\underset{\alpha =1}{\sum }}\xi _{\alpha }\right) \right\} .
\end{equation*}
\end{proposition}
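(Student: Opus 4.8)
The plan is to start from the slant condition $\eta^{\alpha}(T)=\cos\theta$ for all $\alpha$ and decompose the sum $\sum_{\alpha=1}^{s}\xi_{\alpha}$ into its component along $T$ and its component orthogonal to $T$ inside the osculating space. First I would compute $g\left(\sum_{\alpha}\xi_{\alpha},T\right)=\sum_{\alpha}\eta^{\alpha}(T)=s\cos\theta$, which fixes the tangential coefficient. To control the remaining components I would apply $\xi_{\alpha}$ to the two Frenet-type relations the $\varphi$-helix satisfies. Differentiating $\eta^{\alpha}(T)=\cos\theta$ along $\gamma$ and using $\nabla_{T}\xi_{\alpha}=-\varphi T$ from \eqref{nablaxi} gives $g(\nabla_{T}T,\xi_{\alpha})=g(T,\nabla_{T}\xi_{\alpha})=-g(T,\varphi T)=0$, hence $g(v_{2},\xi_{\alpha})=0$; thus $\sum_{\alpha}\xi_{\alpha}$ has no $v_{2}$-component and lies in $sp\{T,v_{3},\dots\}$.

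The key step is to show that the orthogonal part is a multiple of $v_{3}$ alone, i.e.\ that $\sum_{\alpha}\xi_{\alpha}\in sp\{T,v_{3}\}$. Because $\gamma$ is a $\varphi$-helix of order $3$, its Frenet frame $\{T,v_{2},v_{3}\}$ spans a $\varphi$-invariant space, so $\sum_{\alpha}\xi_{\alpha}$ already lies in this osculating space (each $\xi_{\alpha}$ is obtained from the curve's own geometry via $\nabla_{T}\xi_{\alpha}=-\varphi T$). Combined with the two orthogonality facts above, this forces the decomposition
\begin{equation*}
\underset{\alpha=1}{\overset{s}{\sum}}\xi_{\alpha}=s\cos\theta\,T+\rho v_{3}
\end{equation*}
for a scalar function $\rho=g\left(v_{3},\sum_{\alpha}\xi_{\alpha}\right)=s\,\eta^{\alpha}(v_{3})$. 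To see $\rho$ is constant I would differentiate this relation along $\gamma$, substitute the Frenet equations and $\nabla_{T}\xi_{\alpha}=-\varphi T$, and match coefficients; the $v_{3}$-component of the derivative yields $\rho'=0$ since the curvatures $\kappa_{1},\kappa_{2}$ and the angle $\theta$ are all constant.

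Finally, to pin down $\rho^{2}$ I would take the squared norm of both sides. Using $\|T\|=1$, $\|v_{3}\|=1$, and $g(T,v_{3})=0$ gives $\left\|\sum_{\alpha}\xi_{\alpha}\right\|^{2}=s^{2}\cos^{2}\theta+\rho^{2}$. On the other hand $g\left(\sum_{\alpha}\xi_{\alpha},\sum_{\beta}\xi_{\beta}\right)=\sum_{\alpha,\beta}g(\xi_{\alpha},\xi_{\beta})=\sum_{\alpha,\beta}\delta_{\alpha\beta}=s$, so $\rho^{2}=s-s^{2}\cos^{2}\theta$. Rearranging \eqref{sumksialfa} as $v_{3}=\frac{1}{\rho}\left(-s\cos\theta\,T+\sum_{\alpha}\xi_{\alpha}\right)$ and reading off $\rho=\pm\sqrt{s}\sqrt{1-s\cos^{2}\theta}$ then gives the claimed Frenet frame field, with the $v_{2}$-entry being $\pm\varphi T/\sqrt{1-s\cos^{2}\theta}$ by \eqref{eq3.7}. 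I expect the main obstacle to be the middle step: rigorously justifying that $\sum_{\alpha}\xi_{\alpha}$ has no component along $v_{2}$ \emph{and} no component along any higher Frenet vector, i.e.\ confirming it stays in $sp\{T,v_{3}\}$ rather than leaking into $v_{2}$ or beyond $v_{3}$; this is exactly where the $\varphi$-helix hypothesis (osculating order precisely $3$ with $\varphi$-invariant frame) does the essential work, and I would lean on the computations already carried out in the proof of Theorem \ref{theorem 3.1}, equations \eqref{eq3.9}--\eqref{eq3.10}, which establish the analogous identities in the magnetic setting.
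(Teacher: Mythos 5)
Your overall strategy (decompose $\sum_{\alpha}\xi_{\alpha}$ in the Frenet frame, kill the $v_{2}$-component, take norms) matches the paper's, and your computations of the tangential coefficient $s\cos\theta$ and of $\rho^{2}=s-s^{2}\cos^{2}\theta$ are correct. But the step you yourself flag as the main obstacle is genuinely missing, and the way you propose to fill it does not work. Your stated justification that $\sum_{\alpha}\xi_{\alpha}$ lies in the osculating space --- ``each $\xi_{\alpha}$ is obtained from the curve's own geometry via $\nabla_{T}\xi_{\alpha}=-\varphi T$'' --- is not an argument: $\varphi$-invariance of $sp\{T,v_{2},v_{3}\}$ constrains images under $\varphi$ of vectors \emph{in} that space and says nothing a priori about where the $\xi_{\alpha}$ live. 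The paper's actual mechanism is to apply $\varphi$ twice to $T$: by invariance $\varphi^{2}T\in sp\{T,v_{2},v_{3}\}$, while \eqref{fisquare} and the slant condition give $\varphi^{2}T=-T+\cos\theta\sum_{\alpha}\xi_{\alpha}$, so $\cos\theta\sum_{\alpha}\xi_{\alpha}$ lies in the osculating space; combined with $\eta^{\alpha}(v_{2})=0$ this yields $\sum_{\alpha}\xi_{\alpha}\in sp\{T,v_{3}\}$ --- \emph{but only when} $\cos\theta\neq 0$.

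That restriction exposes the second gap: the Legendre case $\theta=\pi/2$ is not reachable by any version of this argument, since the coefficient of $\sum_{\alpha}\xi_{\alpha}$ in $\varphi^{2}T$ vanishes. The paper treats it separately: it computes $\nabla_{T}g(\varphi T,v_{2})$ in two ways to force $g(\varphi T,v_{3})=0$, hence $\varphi T=\pm v_{2}$, and then uses orthonormal expansion of $\varphi v_{3}$ in the $\varphi$-invariant span to conclude $\varphi v_{3}=0$ and $v_{3}=\pm\frac{1}{\sqrt{s}}\sum_{\alpha}\xi_{\alpha}$. Your proposal omits this case entirely. Moreover, your fallback of ``leaning on'' equations \eqref{eq3.9}--\eqref{eq3.10}, and your citation of \eqref{eq3.7} for the form of $v_{2}$, is circular: those identities are derived in the proof of Theorem \ref{theorem 3.1} under the hypothesis that $\gamma$ satisfies the Lorentz equation $\nabla_{T}T=-q\varphi T$, whereas the proposition concerns an arbitrary slant $\varphi$-helix and is used later (Theorem \ref{Theorem32}) precisely to decide which such helices \emph{are} magnetic. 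In the proposition's setting, $\varphi T\parallel v_{2}$ must instead be deduced from the decomposition itself: pairing \eqref{sumksialfa} with $\varphi T$ gives $g(\varphi T,v_{3})=0$, and since $\varphi T\perp T$ and $\varphi T$ lies in the $\varphi$-invariant span, it must be parallel to $v_{2}$. A smaller point: your identity $\rho=s\,\eta^{\alpha}(v_{3})$ presupposes that $\eta^{\alpha}(v_{3})$ is independent of $\alpha$, which the paper obtains by differentiating $\eta^{\alpha}(T)=\cos\theta$ a second time; the decomposition alone only gives the value of the sum $\sum_{\alpha}\eta^{\alpha}(v_{3})$.
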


\begin{proof}
From the assumption, the Frenet frame field $\left\{ T,v_{2},v_{3}\right\} $
is $\varphi $-invariant and%
\begin{equation*}
\eta ^{\alpha }(T)=\cos \theta \text{.}
\end{equation*}%
Differentiating the last equation along the curve, it is easy to see that%
\begin{equation}
\eta ^{\alpha }(v_{2})=0.  \label{etav2}
\end{equation}%
If we differentiate once again, we have%
\begin{equation}
g\left( \varphi T,v_{2}\right) =-\kappa _{1}\cos \theta +\kappa _{2}\eta
^{\alpha }(v_{3}),  \label{gfitev2}
\end{equation}%
which means the value of $\eta ^{\alpha }(v_{3})$ does not depend on $\alpha
.$

Firstly, let us assume that $\theta \neq \frac{\pi }{2}$. Since the\ space
spanned by the Frenet frame field is $\varphi $-invariant, then $\varphi
^{2}T$ is in the set. Using (\ref{fikareT}) and (\ref{etav2}), we can write
\begin{equation*}
\underset{\alpha =1}{\overset{s}{\sum }}\xi _{\alpha }\in sp\left\{
T,v_{3}\right\} ,
\end{equation*}%
that is,%
\begin{equation}
\underset{\alpha =1}{\overset{s}{\sum }}\xi _{\alpha }=s\cos \theta T+\rho
v_{3}.  \label{sumksi}
\end{equation}%
If we take the norm of both sides, we find $\rho ^{2}=s-s^{2}\cos ^{2}\theta
$. Since the value of $\eta ^{\alpha }(v_{3})$ does not depend on $\alpha $,
we obtain%
\begin{equation*}
\rho =g\left( v_{3},\underset{\alpha =1}{\overset{s}{\sum }}\xi _{\alpha
}\right) =s\eta ^{\alpha }\left( v_{3}\right) .
\end{equation*}

If we apply $\varphi T$ to (\ref{sumksi}), we get $g(\varphi T,v_{3})=0$.
Since $\varphi T\perp T$ , $\varphi T\perp v_{3}$ and $sp\left\{
T,v_{2},v_{3}\right\} $ is $\varphi $-invariant, we have $\varphi T\parallel
v_{2}$. As a result, we find%
\begin{equation*}
v_{2}=\frac{\pm \varphi T}{\left\Vert \varphi T\right\Vert }=\frac{\pm
\varphi T}{\sqrt{1-s\cos ^{2}\theta }}.
\end{equation*}

Now let us consider the Legendre case, i.e., $\theta =\frac{\pi }{2}$. From (%
\ref{gfitev2}), we find%
\begin{equation}
\nabla _{T}g\left( \varphi T,v_{2}\right) =-\kappa _{2}g(\varphi T,v_{3}).
\label{starr2}
\end{equation}%
Using (\ref{fisquare}) and (\ref{nablaf}), we calculate%
\begin{eqnarray}
\nabla _{T}\varphi T &=&(\nabla _{T}\varphi )T+\varphi \nabla _{T}T
\label{nablafiTson} \\
&=&\underset{\alpha =1}{\overset{s}{\sum }}\xi _{\alpha }+\kappa _{1}\varphi
v_{2}.  \notag
\end{eqnarray}%
Using the last equation, we obtain%
\begin{eqnarray}
\nabla _{T}g\left( \varphi T,v_{2}\right) &=&g\left( \nabla _{T}\varphi
T,v_{2}\right) +g(\varphi T,\nabla _{T}v_{2})  \label{starr1} \\
&=&\kappa _{2}g\left( \varphi T,v_{3}\right) .  \notag
\end{eqnarray}%
Equations (\ref{starr2}) and (\ref{starr1}) give us $g\left( \varphi
T,v_{3}\right) =0$, that is, $\varphi T\parallel v_{2}$. Thus, we have $%
\varphi T=\pm v_{2}$. Consequently, the Frenet frame field becomes $\left\{
T,\pm \varphi T,v_{3}\right\} $. Now, we must show that $v_{3}$ is parallel
to $\underset{\alpha =1}{\overset{s}{\sum }}\xi _{\alpha }$. Since the space
spanned by the Frenet frame field is $\varphi $-invariant, from orthonormal
expansion, we can write%
\begin{equation*}
\varphi v_{3}=g(\varphi v_{3},T)T\pm g\left( \varphi v_{3},\pm \varphi
T\right) \varphi T+g\left( \varphi v_{3},v_{3}\right) v_{3},
\end{equation*}%
which reduces to%
\begin{equation}
\varphi v_{3}=g(\varphi v_{3},T)T.  \label{fiv3}
\end{equation}%
If we apply $\varphi $ to equation (\ref{fiv3}) and use (\ref{fisquare}), we
find%
\begin{equation}
-v_{3}+\underset{\alpha =1}{\overset{s}{\sum }}\eta ^{\alpha }(v_{3})\xi
_{\alpha }=g(\varphi v_{3},T)\varphi T.  \label{equ1}
\end{equation}%
Applying $\varphi T$ to (\ref{equ1}) and using the Frenet frame field, we
have $g(\varphi v_{3},T)=0$. As a result, we get $\varphi v_{3}=0$ and
equation (\ref{equ1}) becomes%
\begin{equation*}
-v_{3}+\underset{\alpha =1}{\overset{s}{\sum }}\eta ^{\alpha }(v_{3})\xi
_{\alpha }=0.
\end{equation*}%
We have already shown that the value of $\eta ^{\alpha }(v_{3})$ does not
depend on $\alpha $; so, we can write
\begin{equation}
v_{3}=\eta ^{\alpha }(v_{3})\underset{\alpha =1}{\overset{s}{\sum }}\xi
_{\alpha }.  \label{v3}
\end{equation}%
Since $v_{3}$ and $\xi _{\alpha }$ are unit for all $\alpha =1,...,s,$ we
find $\eta ^{\alpha }(v_{3})=\frac{\pm 1}{\sqrt{s}}.$ Finally, for $\theta =%
\frac{\pi }{2},$ we have $\rho ^{2}=s$ and $\underset{\alpha =1}{\overset{s}{%
\sum }}\xi _{\alpha }=\rho v_{3}$, which completes the proof.
\end{proof}

\begin{corollary}
\label{Prop3.1}Let $\gamma $ be a Legendre $\varphi $-helix of order 3 in an
$S$-manifold $(M^{2m+s},\varphi ,\xi _{\alpha },\eta ^{\alpha },g)$. Then $%
\kappa _{2}=\sqrt{s}$, $v_{2}=\pm \varphi T$ and $v_{3}=\pm \frac{1}{\sqrt{s}%
}\underset{\alpha =1}{\overset{s}{\sum }}\xi _{\alpha }$.
\end{corollary}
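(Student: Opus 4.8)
The plan is to treat a Legendre curve as the special case $\theta=\pi/2$ of a slant curve and simply specialize Proposition \ref{PROPSLANT}. First I would observe that the Legendre condition $\eta^\alpha(T)=0$ is exactly $\cos\theta=0$, so the hypotheses of Proposition \ref{PROPSLANT} hold with $\cos\theta=0$. Substituting $\cos\theta=0$ into the Frenet frame field produced there, the factor $\sqrt{1-s\cos^2\theta}$ collapses to $1$ and the term $-s\cos\theta\,T$ vanishes, yielding at once $v_2=\pm\varphi T$ and $v_3=\pm\frac{1}{\sqrt{s}}\sum_{\alpha=1}^s\xi_\alpha$. The same specialization gives $\rho^2=s-s^2\cos^2\theta=s$, so $\rho=\pm\sqrt{s}$, consistent with the relation $\sum_{\alpha=1}^s\xi_\alpha=\rho v_3$.

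It remains only to pin down $\kappa_2$, since Proposition \ref{PROPSLANT} records the frame but not the curvature. The cleanest route I see is to differentiate the relation $\sum_{\alpha=1}^s\xi_\alpha=\rho v_3$ along $\gamma$. On the left, (\ref{nablaxi}) gives $\sum_{\alpha=1}^s\nabla_T\xi_\alpha=-s\varphi T$; on the right, the order-$3$ Frenet equation (\ref{Frenet}) together with $\kappa_3=0$ gives $\rho\nabla_T v_3=-\rho\kappa_2 v_2=\mp\rho\kappa_2\varphi T$. Comparing the two, and using $\varphi T\neq 0$, forces $\left\vert\rho\kappa_2\right\vert=s$, and since $\left\vert\rho\right\vert=\sqrt{s}$ this yields $\kappa_2=\sqrt{s}$.

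An equally short alternative for $\kappa_2$ is to compute $\nabla_T v_2$ directly from $v_2=\pm\varphi T$: equation (\ref{nablafiTson}) gives $\nabla_T\varphi T=\sum_{\alpha=1}^s\xi_\alpha+\kappa_1\varphi v_2$, and in the Legendre case $\varphi^2 T=-T$, so $\varphi v_2=\mp T$; matching this with the Frenet equation $\nabla_T v_2=-\kappa_1 T+\kappa_2 v_3$ gives $\kappa_2 v_3=\pm\sum_{\alpha=1}^s\xi_\alpha$, and taking norms (the $\xi_\alpha$ being orthonormal) again yields $\kappa_2=\sqrt{s}$.

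There is essentially no hard step here: the statement is a direct corollary, and the only content beyond quoting Proposition \ref{PROPSLANT} is the one-line evaluation of $\kappa_2$. The point requiring a little care is the bookkeeping of the $\pm$ signs and the use of the Legendre identity $\varphi^2 T=-T$ (rather than the general $\varphi^2 T=-T+\cos\theta\sum_{\alpha=1}^s\xi_\alpha$); obtaining $\kappa_2>0$ as a genuine curvature function is automatic once norms are taken.
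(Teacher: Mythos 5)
Your proposal is correct and takes essentially the same route as the paper: the paper likewise quotes the Legendre case of Proposition \ref{PROPSLANT} (its equation for $v_{3}$), differentiates the relation between $v_{3}$ and $\sum_{\alpha =1}^{s}\xi _{\alpha }$ along $\gamma $, and uses $\nabla _{T}\xi _{\alpha }=-\varphi T$ together with the Frenet equations to read off $\kappa _{2}=\sqrt{s}$ and $v_{2}=\pm \varphi T$. The only cosmetic difference is that you extract $v_{2}$ directly from the Proposition's frame field before differentiating, whereas the paper obtains $v_{2}$ and $\kappa _{2}$ simultaneously from that single differentiation.
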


\begin{proof}
From equation (\ref{v3}), we already have
\begin{equation*}
v_{3}=\pm \frac{1}{\sqrt{s}}\underset{\alpha =1}{\overset{s}{\sum }}\xi
_{\alpha }.
\end{equation*}%
If we differentiate this equation and use (\ref{Frenet}), we obtain%
\begin{equation}
-\kappa _{2}v_{2}=\pm \frac{1}{\sqrt{s}}\underset{\alpha =1}{\overset{s}{%
\sum }}\nabla _{T}\xi _{\alpha }.  \label{eq11}
\end{equation}%
Using equations (\ref{nablaxi}) and (\ref{eq11}), we find that $\kappa _{2}=%
\sqrt{s}$ and $v_{2}=\pm \varphi T$.
\end{proof}

Finally, we can give the following theorem:

\begin{theorem}
\label{Theorem32}Let $\gamma $ be a slant $\varphi $-helix of order $r\leq 3$
on an $S$-manifold $(M^{2m+s},\varphi ,\xi _{\alpha },\eta ^{\alpha },g)$.
Let $\theta $ denote the contact angle of $\gamma $. Then we have

i. If $\cos \theta =\pm \frac{1}{\sqrt{s}}$, then $\gamma $ is an integral
curve of \ $\pm \frac{1}{\sqrt{s}}\underset{\alpha =1}{\overset{s}{\sum }}%
\xi _{\alpha }$, hence it is a normal magnetic curve for $F_{q}$ with an
arbitrary $q$.

ii. If $\cos \theta =0$ and $\kappa _{1}\neq 0$ (i.e. $\gamma $ is a
non-geodesic Legendre curve), then $\gamma $ is a magnetic curve for $F_{\pm
\kappa _{1}}$.

iii. If $\cos \theta =\frac{\varepsilon }{\sqrt{\kappa _{1}^{2}+s}}$, then $%
\gamma $ is a magnetic curve for $F_{\varepsilon \sqrt{\kappa _{1}^{2}+s}}$,
where $\varepsilon =-sgn(g(\varphi T,v_{2}))$. In this case, $\gamma $ is a
slant $\varphi $-circle.

iv. If $\cos \theta =\frac{\varepsilon \sqrt{s}\pm \kappa _{2}}{\sqrt{s}%
\sqrt{\kappa _{1}^{2}+\left( \varepsilon \sqrt{s}\pm \kappa _{2}\right) ^{2}}%
}$, then $\gamma $ is a magnetic curve for $F_{\varepsilon \sqrt{\kappa
_{1}^{2}+\left( \varepsilon \sqrt{s}\pm \kappa _{2}\right) ^{2}}}$, where $%
\varepsilon =-sgn(g(\varphi T,v_{2}))$ and the sign $\pm $ corresponds to
the sign of $\eta ^{\alpha }(v_{3})$.

v. Except the above cases, $\gamma $ is not a magnetic curve for any $F_{q}$.
\end{theorem}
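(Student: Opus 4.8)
The plan is to read everything off the single vector equation defining a normal magnetic trajectory, namely $\nabla_T T=-q\varphi T$, which by the Frenet equations (\ref{Frenet}) becomes $\kappa_1 v_2=-q\varphi T$. Two consequences are immediate. First, $\varphi T$ must be proportional to $v_2$; for a slant $\varphi$-helix this is exactly the content of Proposition \ref{PROPSLANT}, which supplies $v_2=\pm\varphi T/\sqrt{1-s\cos^2\theta}$ and $\left\Vert\varphi T\right\Vert=\sqrt{1-s\cos^2\theta}$. Second, comparing lengths and orientations in $\kappa_1 v_2=-q\varphi T$ forces $q=\varepsilon\kappa_1/\sqrt{1-s\cos^2\theta}$ with $\varepsilon=-sgn(g(\varphi T,v_2))$. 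So the only candidate field strength is dictated by $\kappa_1$ and $\theta$ alone, and the task reduces to expressing $\cos\theta$ through the intrinsic curvatures.

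To do this I would feed the above into the second structure relation (\ref{gfitev2}), $g(\varphi T,v_2)=-\kappa_1\cos\theta+\kappa_2\eta^\alpha(v_3)$. Substituting $g(\varphi T,v_2)=-\varepsilon\sqrt{1-s\cos^2\theta}$ from the first step and, from Proposition \ref{PROPSLANT}, $\eta^\alpha(v_3)=\rho/s$ with $\rho=\pm\sqrt{s}\sqrt{1-s\cos^2\theta}$ (the sign being $sgn(\eta^\alpha(v_3))$), this collapses to $\sqrt{s}\,\kappa_1\cos\theta/\sqrt{1-s\cos^2\theta}=\varepsilon\sqrt{s}\pm\kappa_2$. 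Squaring and using $1-(1-s\cos^2\theta)=s\cos^2\theta$ yields the closed form for $\cos\theta$ in case iv, while the identity $\kappa_1^2+(\varepsilon\sqrt{s}\pm\kappa_2)^2=\kappa_1^2/(1-s\cos^2\theta)$ rewrites $q=\varepsilon\kappa_1/\sqrt{1-s\cos^2\theta}$ as the stated $F_{\varepsilon\sqrt{\kappa_1^2+(\varepsilon\sqrt{s}\pm\kappa_2)^2}}$.

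The remaining items are degenerate specializations of this computation. When $\cos\theta=\pm1/\sqrt{s}$ the radical vanishes, so $\varphi T=0$ and $\nabla_T T=0=-q\varphi T$ for every $q$; the slant condition then gives $T=\pm\frac{1}{\sqrt{s}}\sum_{\alpha=1}^{s}\xi_\alpha$, which is case i. When $\cos\theta=0$ (Legendre) the radical equals $1$, so $q=\varepsilon\kappa_1=\pm\kappa_1$, and Corollary \ref{Prop3.1} supplies $\kappa_2=\sqrt{s}$, giving case ii. For osculating order $2$ the term $\kappa_2\eta^\alpha(v_3)$ is absent from (\ref{gfitev2}), which now reads $\varepsilon\sqrt{1-s\cos^2\theta}=\kappa_1\cos\theta$; solving gives $\cos\theta=\varepsilon/\sqrt{\kappa_1^2+s}$ and, through the same radical identity with $\kappa_2=0$, the strength $F_{\varepsilon\sqrt{\kappa_1^2+s}}$, i.e. the slant $\varphi$-circle of case iii.

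For sufficiency I would reverse the computation: assuming $\cos\theta$ has the prescribed value, substitute the frame of Proposition \ref{PROPSLANT} into $\kappa_1 v_2$ and check directly that it equals $-q\varphi T$ for the listed $q$. Case v is then the exclusion statement: once the first Frenet equation fixes $q=\varepsilon\kappa_1/\sqrt{1-s\cos^2\theta}$, relation (\ref{gfitev2}) pins $\cos\theta$ to precisely the values appearing in i--iv, so a slant $\varphi$-helix whose data fall under none of these cannot satisfy $\nabla_T T=-q\varphi T$ for any $q$. I expect the only real difficulty to be bookkeeping the two independent signs --- $\varepsilon=-sgn(g(\varphi T,v_2))$ and the $\pm$ coming from $sgn(\eta^\alpha(v_3))$ --- so that each branch of the $\cos\theta$-formula is paired with the correct branch of the field strength; the algebra itself, once the identity $\kappa_1^2+(\varepsilon\sqrt{s}\pm\kappa_2)^2=\kappa_1^2/(1-s\cos^2\theta)$ is in hand, is routine.
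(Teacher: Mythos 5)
Your proposal is correct and takes essentially the same route as the paper: both arguments hinge on Proposition \ref{PROPSLANT} (plus Corollary \ref{Prop3.1} for the Legendre case) to write $v_2$ as $\mp\varphi T/\sqrt{1-s\cos ^{2}\theta }$, extract the one constraint tying $\kappa _{1}$, $\kappa _{2}$, $\cos \theta $ and the two signs, and then read the field strength directly off $\kappa _{1}v_{2}=-q\varphi T$. The only cosmetic difference is that you carry that constraint in the scalar form (\ref{gfitev2}), while the paper differentiates $v_{2}$ to obtain the vector identity (\ref{important}); taking norms in (\ref{important}) gives exactly your relation $\sqrt{s}\,\kappa _{1}\cos \theta /\sqrt{1-s\cos ^{2}\theta }=\varepsilon \sqrt{s}\pm \kappa _{2}$.
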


\begin{proof}
Let $\cos \theta =\pm \frac{1}{\sqrt{s}}$. Then we have%
\begin{equation*}
T=\pm \frac{1}{\sqrt{s}}\underset{\alpha =1}{\overset{s}{\sum }}\xi _{\alpha
},
\end{equation*}%
which gives us $\nabla _{T}T=0$. We also have $\varphi T=0$. So $\gamma $
satisfies $\nabla _{T}T=-q\varphi T$ for any $q$, which proves \textit{i}.

Now let $\cos \theta =0$ and $\kappa _{1}\neq 0$. Using Corollary \ref%
{Prop3.1}, we have%
\begin{equation*}
\nabla _{T}T=\kappa _{1}\left( \pm \varphi T\right) =-q\varphi T,
\end{equation*}%
which gives us $q=\pm \kappa _{1}$. This completes the proof of \textit{ii}.

From Proposition \ref{PROPSLANT}, we have the Frenet frame field
\begin{equation*}
\left\{ T,\frac{\pm \varphi T}{\sqrt{1-s\cos ^{2}\theta }},\frac{\pm 1}{%
\sqrt{s}\sqrt{1-s\cos ^{2}\theta }}\left( -s\cos \theta T+\overset{s}{%
\underset{\alpha =1}{\sum }}\xi _{\alpha }\right) \right\}
\end{equation*}%
when $r=3$ and
\begin{equation*}
\left\{ T,\frac{\pm \varphi T}{\sqrt{1-s\cos ^{2}\theta }}\right\}
\end{equation*}%
when $r=2$. If we differentiate $v_{2}$ along the curve, after some
calculations, in both cases, we find%
\begin{equation}
\left( 1\pm \frac{\kappa _{1}\cos \theta }{\sqrt{1-s\cos ^{2}\theta }}%
\right) \left( -s\cos \theta T+\overset{s}{\underset{\alpha =1}{\sum }}\xi
_{\alpha }\right) =\pm \kappa _{2}\sqrt{1-s\cos ^{2}\theta }v_{3},
\label{important}
\end{equation}%
(taking $\kappa _{2}=0$, when $r=2$).

Next, let us assume $\cos \theta =\frac{\varepsilon }{\sqrt{\kappa _{1}^{2}+s%
}}$, where we denote $\varepsilon =-sgn(g(\varphi T,v_{2}))$. Then the left
side of equation (\ref{important}) vanishes. Thus we get $\kappa _{2}=0$.
From the assumption, we also have $\kappa _{1}=constant$, that is, $\gamma $
is a slant $\varphi $-circle. Using the Frenet frame field, we find $\nabla
_{T}T=-q\varphi T=\kappa _{1}v_{2}$, where $q=\varepsilon \sqrt{\kappa
_{1}^{2}+s}$. So, we have just completed the proof of \textit{iii}.

Finally, let us assume $\cos \theta =\frac{\varepsilon \sqrt{s}\pm \kappa
_{2}}{\sqrt{s}\sqrt{\kappa _{1}^{2}+\left( \varepsilon \sqrt{s}\pm \kappa
_{2}\right) ^{2}}}$, where $\varepsilon =-sgn(g(\varphi T,v_{2}))$ and the
sign $\pm $ corresponds to the sign of $\eta ^{\alpha }(v_{3})$. In this
case, let us take $\kappa _{2}\neq 0$, since we have already investigated
order $r=2$. Using the Frenet frame field, after some calculations, we
obtain $\nabla _{T}T=-q\varphi T=\kappa _{1}v_{2}$, where $q=\varepsilon
\sqrt{\kappa _{1}^{2}+\left( \varepsilon \sqrt{s}\pm \kappa _{2}\right) ^{2}}
$. Hence, the proof of \textit{iv} is complete.

Since we have considered all cases, we can state that there exist no other
slant magnetic $\varphi $-helices in $M$.
\end{proof}

From the proof of Theorem \ref{theorem 3.1}, we can give the following
proposition:

\begin{proposition}
\label{prop2}Let $(M^{2m+s},\varphi ,\xi _{\alpha },\eta ^{\alpha },g)$ be
an $S$-manifold. There exist no non-geodesic slant $\varphi $-circles as
magnetic curves corresponding to $F_{q}$ for $0<\left\vert q\right\vert \leq
\sqrt{s}$.
\end{proposition}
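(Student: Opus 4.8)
The plan is to argue by contradiction and reduce everything to case $b)$ in the proof of Theorem \ref{theorem 3.1}. Suppose, contrary to the claim, that for some $q$ with $0<\left\vert q\right\vert \leq \sqrt{s}$ there is a non-geodesic slant $\varphi$-circle $\gamma$ which is a magnetic curve for $F_{q}$. Being a circle, $\gamma$ has osculating order $r=2$ with constant first curvature $\kappa_{1}$, and being a slant normal magnetic curve it must occur in the classification list of Theorem \ref{theorem 3.1}. The only item of that list consisting of non-geodesic curves of osculating order $2$ is case $b)$, so $\gamma$ must be exactly such a slant circle; this is the sense in which the statement follows "from the proof of Theorem \ref{theorem 3.1}".

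First I would recall the two identities produced in the derivation of case $b)$. Applying $\xi_{\alpha}$ to the Lorentz equation $\nabla_{T}T=-q\varphi T$ and differentiating the resulting relation $\eta^{\alpha}(T)=\cos\theta$ gives, for a curve of osculating order $r=2$, the scalar constraint $-\kappa_{1}\cos\theta+sgn(q)\sqrt{1-s\cos^{2}\theta}=0$; combined with $\kappa_{1}=\left\vert q\right\vert \sqrt{1-s\cos^{2}\theta}$ this forces the contact angle to satisfy $\cos\theta=\frac{1}{q}$, and hence
\begin{equation*}
\kappa_{1}=\left\vert q\right\vert \sqrt{1-s\cos^{2}\theta}=\sqrt{q^{2}-s}.
\end{equation*}
These are precisely the conclusions of case $b)$, and they are obtained without any assumption on the magnitude of $q$ beyond $q\neq 0$.

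The decisive step is then a reality/positivity argument. Since $\gamma$ is non-geodesic, its first curvature is a genuine positive constant, $\kappa_{1}>0$, so $\sqrt{q^{2}-s}$ must be a positive real number; this is possible only when $q^{2}-s>0$, i.e. $\left\vert q\right\vert >\sqrt{s}$. Equivalently, feeding $\cos\theta=1/q$ into the slant bound $\left\vert \cos\theta\right\vert \leq 1/\sqrt{s}$ already yields $\left\vert q\right\vert \geq\sqrt{s}$, while the boundary value $\left\vert q\right\vert =\sqrt{s}$ returns $\kappa_{1}=0$, forcing $\gamma$ to be a geodesic. Either way $\left\vert q\right\vert >\sqrt{s}$, which contradicts the standing assumption $0<\left\vert q\right\vert \leq\sqrt{s}$, and the proposition follows.

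I expect the only delicate point to be the bookkeeping that legitimizes placing $\gamma$ in case $b)$: one must check that a slant $\varphi$-circle which solves the Lorentz equation is genuinely a slant normal magnetic curve of osculating order $2$, so that the classification of Theorem \ref{theorem 3.1} applies verbatim and no case other than $b)$ can arise. Once this identification is secured, there is no substantive computation left, and the contradiction is immediate from the sign of $q^{2}-s$.
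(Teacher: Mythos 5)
Your proof is correct and follows essentially the same route as the paper, which states the proposition as an immediate consequence of case $b)$ in the proof of Theorem \ref{theorem 3.1}: there, $\cos \theta = \frac{1}{q}$ and $\kappa _{1}=\sqrt{q^{2}-s}$ force $\left\vert q\right\vert >\sqrt{s}$ for any non-geodesic slant $\varphi $-circle. Your added positivity/reality argument for $\sqrt{q^{2}-s}$ is exactly the intended justification, so there is nothing to change.
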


Theorem \ref{Theorem32} and Proposition \ref{prop2} generalize Theorem 3.2
and Proposition 3.2 in \cite{DIMN-2015} to $S$-manifolds, respectively.
Under the condition $s=1$, we obtain their results.

\section{\protect\bigskip Construction of slant normal magnetic curves in $%
\mathbb{R}
^{2n+s}(-3s)$\label{construction}}

In this section, we find parametric equations of slant normal magnetic
curves in $%
\mathbb{R}
^{2n+s}(-3s)$. As a start, we recall structures defined on this $S$%
-manifold. Let us take $M=%
\mathbb{R}
^{2n+s}$ with coordinate functions $\left\{
x_{1},...x_{n},y_{1},...,y_{n},z_{1},...,z_{s}\right\} $ and define%
\begin{equation*}
\xi _{\alpha }=2\frac{\partial }{\partial z_{\alpha }},\text{ }\alpha
=1,...,s,
\end{equation*}%
\begin{equation*}
\eta ^{\alpha }=\frac{1}{2}\left( dz_{\alpha }-\overset{n}{\underset{i=1}{%
\sum }}y_{i}dx_{i}\right) ,\text{ }\alpha =1,...,s,
\end{equation*}%
\begin{equation*}
\varphi X=\overset{n}{\underset{i=1}{\sum }}Y_{i}\frac{\partial }{\partial
x_{i}}-\overset{n}{\underset{i=1}{\sum }}X_{i}\frac{\partial }{\partial y_{i}%
}+\left( \overset{n}{\underset{i=1}{\sum }}Y_{i}y_{i}\right) \left( \overset{%
s}{\underset{\alpha =1}{\sum }}\frac{\partial }{\partial z_{\alpha }}\right)
,
\end{equation*}%
\begin{equation*}
g=\overset{s}{\underset{\alpha =1}{\sum }}\eta ^{\alpha }\otimes \eta
^{\alpha }+\frac{1}{4}\overset{n}{\underset{i=1}{\sum }}\left( dx_{i}\otimes
dx_{i}+dy_{i}\otimes dy_{i}\right) ,
\end{equation*}%
where%
\begin{equation*}
X=\overset{n}{\underset{i=1}{\sum }}\left( X_{i}\frac{\partial }{\partial
x_{i}}+Y_{i}\frac{\partial }{\partial y_{i}}\right) +\overset{s}{\underset{%
\alpha =1}{\sum }}\left( Z_{\alpha }\frac{\partial }{\partial z_{\alpha }}%
\right) \in \chi (M).
\end{equation*}

\bigskip It is well-known that $\left(
\mathbb{R}
^{2n+s},\varphi ,\xi _{\alpha },\eta ^{\alpha },g\right) $ is an $S$-space
form with constant $\varphi $-sectional curvature $-3s.$ Hence it is denoted
by $%
\mathbb{R}
^{2n+s}(-3s)$ \cite{Hasegawa}. The following vector fields
\begin{equation*}
X_{i}=2\frac{\partial }{\partial y_{i}},\text{ }X_{n+i}=\varphi X_{i}=2(%
\frac{\partial }{\partial x_{i}}+y_{i}\overset{s}{\underset{\alpha =1}{\sum }%
}\frac{\partial }{\partial z_{\alpha }}),\text{ }\xi _{\alpha }=2\frac{%
\partial }{\partial z_{\alpha }}
\end{equation*}%
form a $g$-orthonormal basis and the Levi-Civita connection is%
\begin{equation*}
\nabla _{X_{i}}X_{j}=\nabla _{X_{n+i}}X_{n+j}=0,\nabla
_{X_{i}}X_{n+j}=\delta _{ij}\overset{s}{\underset{\alpha =1}{\sum }}\xi
_{\alpha },\nabla _{X_{n+i}}X_{j}=-\delta _{ij}\overset{s}{\underset{\alpha
=1}{\sum }}\xi _{\alpha },
\end{equation*}%
\begin{equation*}
\nabla _{X_{i}}\xi _{\alpha }=\nabla _{\xi _{\alpha }}X_{i}=-X_{n+i},\nabla
_{X_{n+i}}\xi _{\alpha }=\nabla _{\xi _{\alpha }}X_{n+i}=X_{i}.
\end{equation*}%
(see \cite{Hasegawa}). Let $\gamma :I\rightarrow
\mathbb{R}
^{2n+s}(-3s)$ be a unit-speed slant curve with contact angle $\theta $. Let
us denote%
\begin{equation*}
\gamma (t)=\left( \gamma _{1}(t),...,\gamma _{n}(t),\gamma
_{n+1}(t),...,\gamma _{2n}(t),\gamma _{2n+1}(t),...,\gamma _{2n+s}(t)\right)
,
\end{equation*}%
where $t$ is the arc-length parameter. Then $\gamma $ has the tangent vector
field
\begin{equation*}
T=\gamma _{1}^{\prime }\frac{\partial }{\partial x_{1}}+...+\gamma
_{n}^{\prime }\frac{\partial }{\partial x_{n}}+\gamma _{n+1}^{\prime }\frac{%
\partial }{\partial y_{1}}+...+\gamma _{2n}^{\prime }\frac{\partial }{%
\partial y_{n}}+\gamma _{2n+1}^{\prime }\frac{\partial }{\partial z_{1}}%
+...+\gamma _{2n+s}^{\prime }\frac{\partial }{\partial z_{s}},
\end{equation*}%
which can be written as%
\begin{eqnarray*}
T &=&\frac{1}{2}\left[ \gamma _{n+1}^{\prime }X_{1}+...+\gamma _{2n}^{\prime
}X_{n}+\gamma _{1}^{\prime }X_{n+1}+...+\gamma _{n}^{\prime }X_{2n}\right. \\
&&\text{ \ \ \ }+\left( \gamma _{2n+1}^{\prime }-\gamma _{1}^{\prime }\gamma
_{n+1}-...-\gamma _{n}^{\prime }\gamma _{2n}\right) \xi _{1}+... \\
&&\text{ \ \ \ }\left. +\left( \gamma _{2n+s}^{\prime }-\gamma _{1}^{\prime
}\gamma _{n+1}-...-\gamma _{n}^{\prime }\gamma _{2n}\right) \xi _{s}\right] .
\end{eqnarray*}%
Since $\gamma $ is slant curve, we have%
\begin{equation*}
\eta ^{\alpha }(T)=\frac{1}{2}\left( \gamma _{2n+\alpha }^{\prime }-\gamma
_{1}^{\prime }\gamma _{n+1}-...-\gamma _{n}^{\prime }\gamma _{2n}\right)
=\cos \theta
\end{equation*}%
for all $\alpha =1,...,s$. So, we obtain%
\begin{equation}
\gamma _{2n+1}^{\prime }=...=\gamma _{2n+s}^{\prime }=2\cos \theta +\gamma
_{1}^{\prime }\gamma _{n+1}+...+\gamma _{n}^{\prime }\gamma _{2n}.
\label{slant}
\end{equation}%
Since $\gamma $ is a unit-speed, we can write%
\begin{equation}
\left( \gamma _{1}^{\prime }\right) ^{2}+...+\left( \gamma _{2n}^{\prime
}\right) ^{2}=4\left( 1-s\cos ^{2}\theta \right) .  \label{unit}
\end{equation}%
These equations were obtained in our paper \cite{GO-2018}.

Now, our aim is to find parametric equations for slant normal magnetic
curves. So, let us assume that $\gamma :I\rightarrow
\mathbb{R}
^{2n+s}(-3s)$ is a normal magnetic curvature. From the Lorentz equation, we
have%
\begin{equation}
\nabla _{T}T=-q\varphi T\text{,}  \label{1star}
\end{equation}%
where $q\neq 0$ is a constant. Using the Levi-Civita connection, we calculate%
\begin{eqnarray}
\nabla _{T}T &=&\frac{1}{2}\left\{ \left( \gamma _{n+1}^{\prime \prime
}+2s\cos \theta \gamma _{1}^{\prime }\right) X_{1}+...+\left( \gamma
_{2n}^{\prime \prime }+2s\cos \theta \gamma _{n}^{\prime }\right)
X_{n}\right.  \label{2star} \\
&&\left. +\left( \gamma _{1}^{\prime \prime }-2s\cos \theta \gamma
_{n+1}^{\prime }\right) X_{n+1}+...+\left( \gamma _{n}^{\prime \prime
}-2s\cos \theta \gamma _{2n}^{\prime }\right) X_{2n}\right\}  \notag
\end{eqnarray}%
and%
\begin{equation}
\varphi T=\frac{1}{2}\left\{ -\gamma _{1}^{\prime }X_{1}-...-\gamma
_{n}^{\prime }X_{n}+\gamma _{n+1}^{\prime }X_{n+1}+...+\gamma _{2n}^{\prime
}X_{2n}\right\} .  \label{3star}
\end{equation}%
From equations (\ref{1star}), (\ref{2star}) and (\ref{3star}), we have%
\begin{gather*}
\frac{\gamma _{n+1}^{\prime \prime }+2s\cos \theta \gamma _{1}^{\prime }}{%
-\gamma _{1}^{\prime }}=...=\frac{\gamma _{2n}^{\prime \prime }+2s\cos
\theta \gamma _{n}^{\prime }}{-\gamma _{n}^{\prime }} \\
=\frac{\gamma _{1}^{\prime \prime }-2s\cos \theta \gamma _{n+1}^{\prime }}{%
\gamma _{n+1}^{\prime }}=...=\frac{\gamma _{n}^{\prime \prime }-2s\cos
\theta \gamma _{2n}^{\prime }}{\gamma _{2n}^{\prime }}=-q,
\end{gather*}%
which is equivalent to%
\begin{equation}
\frac{\gamma _{n+1}^{\prime \prime }}{-\gamma _{1}^{\prime }}=...=\frac{%
\gamma _{2n}^{\prime \prime }}{-\gamma _{n}^{\prime }}=\frac{\gamma
_{1}^{\prime \prime }}{\gamma _{n+1}^{\prime }}=...=\frac{\gamma
_{n}^{\prime \prime }}{\gamma _{2n}^{\prime }}=\lambda ,
\label{mainequation}
\end{equation}%
where we denote $\lambda =-q+2s\cos \theta $. Firstly, let us assume $%
\lambda \neq 0$. From equation (\ref{mainequation}), if we select pairs%
\begin{equation*}
\frac{\gamma _{n+1}^{\prime \prime }}{-\gamma _{1}^{\prime }}=\frac{\gamma
_{1}^{\prime \prime }}{\gamma _{n+1}^{\prime }},...,\frac{\gamma
_{2n}^{\prime \prime }}{-\gamma _{n}^{\prime }}=\frac{\gamma _{n}^{\prime
\prime }}{\gamma _{2n}^{\prime }},
\end{equation*}%
solving ODEs, we have%
\begin{equation*}
\left( \gamma _{1}^{\prime }\right) ^{2}+\left( \gamma _{n+1}^{\prime
}\right) ^{2}=c_{1}^{2},...,\left( \gamma _{n}^{\prime }\right) ^{2}+\left(
\gamma _{2n}^{\prime }\right) ^{2}=c_{n}^{2},
\end{equation*}%
where $c_{1},...,c_{n}$ are arbitrary constants. Thus, we can write%
\begin{eqnarray}
\gamma _{1}^{\prime } &=&c_{1}\cos f_{1},...,\gamma _{n}^{\prime }=c_{n}\cos
f_{n},  \label{equa1} \\
\gamma _{n+1}^{\prime } &=&c_{1}\sin f_{1},...,\gamma _{2n}^{\prime
}=c_{n}\sin f_{n},  \notag
\end{eqnarray}%
where $f_{1},...,f_{n}$ are differentiable functions on $I$. From (\ref%
{mainequation}) and (\ref{equa1}), we find%
\begin{equation*}
f_{1}^{\prime }=...=f_{n}^{\prime }=-\lambda ,
\end{equation*}%
which gives us%
\begin{equation*}
f_{i}=-\lambda t+a_{i},\text{ }i=1,2,...,n
\end{equation*}%
where $a_{1},...,a_{n}$ are arbitrary constants. Now, if we integrate (\ref%
{equa1}), we have%
\begin{equation*}
\gamma _{1}=\frac{c_{1}}{-\lambda }\sin f_{1}+b_{1},...,\gamma _{n}=\frac{%
c_{n}}{-\lambda }\sin f_{n}+b_{n},
\end{equation*}%
\begin{equation*}
\gamma _{n+1}=\frac{c_{1}}{\lambda }\cos f_{1}+d_{1},...,\gamma _{2n}=\frac{%
c_{n}}{\lambda }\cos f_{n}+d_{n},
\end{equation*}%
where $b_{i}$ and $d_{i}$ are arbitrary constants $(i=1,...,n)$. Thus, we get%
\begin{equation*}
\gamma _{1}^{\prime }\gamma _{n+1}+...+\gamma _{n}^{\prime }\gamma _{2n}=%
\overset{n}{\underset{i=1}{\sum }}\left( \frac{c_{i}^{2}}{\lambda }\cos
^{2}f_{i}+c_{i}d_{i}\cos f_{i}\right) \text{.}
\end{equation*}%
Using the last equation with (\ref{slant}), we obtain%
\begin{equation*}
\gamma _{2n+\alpha }^{\prime }=2\cos \theta +\overset{n}{\underset{i=1}{\sum
}}\left( \frac{c_{i}^{2}}{\lambda }\cos ^{2}f_{i}+c_{i}d_{i}\cos
f_{i}\right) ,
\end{equation*}%
where $\alpha =1,...,s$. If we integrate this last equation, we find%
\begin{equation*}
\gamma _{2n+\alpha }=2t\cos \theta -\overset{n}{\underset{i=1}{\sum }}%
\left\{ \frac{c_{i}^{2}}{4\lambda ^{2}}\left[ \sin \left( 2f_{i}\right)
+2f_{i}\right] +\frac{c_{i}d_{i}}{\lambda }\sin f_{i}\right\} +h_{\alpha },
\end{equation*}%
for $\alpha =1,...,s$ and $h_{1},...,h_{s}$ are arbitrary constants.
Moreover, from (\ref{unit}) and (\ref{equa1}), we have%
\begin{equation}
c_{1}^{2}+...+c_{n}^{2}=4\left( 1-s\cos ^{2}\theta \right) .
\label{unitlamdanonzero}
\end{equation}%
Thus, we have just finished the case $\lambda \neq 0$.

Secondly, let $\lambda =0$. In this case, we have%
\begin{equation*}
\gamma _{1}^{\prime \prime }=\gamma _{2}^{\prime \prime }=...=\gamma
_{2n}^{\prime \prime }=0,
\end{equation*}%
which gives us%
\begin{equation*}
\gamma _{i}=c_{i}t+d_{i},\text{ }
\end{equation*}%
for $i=1,...,2n,$ where $c_{i}$ and $d_{i}$ are arbitrary constants. Using
the last equation, we calculate%
\begin{equation*}
\gamma _{1}^{\prime }\gamma _{n+1}+...+\gamma _{n}^{\prime }\gamma _{2n}=%
\overset{n}{\underset{i=1}{\sum }}c_{i}\left( c_{n+i}t+d_{n+i}\right) .
\end{equation*}%
So, equation (\ref{slant}) becomes%
\begin{equation*}
\gamma _{2n+\alpha }^{\prime }=2\cos \theta +\overset{n}{\underset{i=1}{\sum
}}c_{i}\left( c_{n+i}t+d_{n+i}\right) ,
\end{equation*}%
which gives us%
\begin{equation*}
\gamma _{2n+\alpha }=2t\cos \theta +\overset{n}{\underset{i=1}{\sum }}%
c_{i}\left( \frac{c_{n+i}}{2}t^{2}+d_{n+i}t\right) +h_{\alpha },
\end{equation*}%
where $h_{\alpha }$ are arbitrary constants for $\alpha =1,...,s$. Since $%
\gamma $ is unit-speed, from (\ref{unit}), we have%
\begin{equation*}
c_{1}^{2}+...+c_{2n}^{2}=4\left( 1-s\cos ^{2}\theta \right) .
\end{equation*}%
To sum up, we give the following Theorem:

\begin{theorem}
The slant normal magnetic curves on $%
\mathbb{R}
^{2n+s}(-3s)$ satisfying the Lorentz equation $\nabla _{T}T=-q\varphi T$
have the parametric equations

a)
\begin{equation*}
\gamma _{i}(t)=\frac{c_{i}}{-\lambda }\sin f_{i}(t)+b_{i},
\end{equation*}%
\begin{equation*}
\gamma _{n+i}(t)=\frac{c_{i}}{\lambda }\cos f_{i}(t)+d_{i},
\end{equation*}%
\begin{equation*}
\gamma _{2n+\alpha }(t)=2t\cos \theta -\overset{n}{\underset{i=1}{\sum }}%
\left\{ \frac{c_{i}^{2}}{4\lambda ^{2}}\left[ \sin \left( 2f_{i}(t)\right)
+2f_{i}(t)\right] +\frac{c_{i}d_{i}}{\lambda }\sin f_{i}(t)\right\}
+h_{\alpha },
\end{equation*}%
\begin{equation*}
f_{i}(t)=-\lambda t+a_{i},
\end{equation*}%
\begin{equation*}
\alpha =1,...,s,\text{ }i=1,2,...,n,
\end{equation*}%
\begin{equation*}
\lambda =-q+2s\cos \theta \neq 0
\end{equation*}%
where $a_{i},$ $b_{i},$ $c_{i},$ $d_{i}$ and $h_{\alpha }$ are arbitrary
constants such that $c_{i}$ satisfies%
\begin{equation*}
c_{1}^{2}+...+c_{n}^{2}=4\left( 1-s\cos ^{2}\theta \right) ;
\end{equation*}%
or

b)
\begin{equation*}
\gamma _{i}(t)=c_{i}t+d_{i},
\end{equation*}%
\begin{equation*}
\gamma _{2n+\alpha }(t)=2t\cos \theta +\overset{n}{\underset{i=1}{\sum }}%
c_{i}\left( \frac{c_{n+i}}{2}t^{2}+d_{n+i}t\right) +h_{\alpha },
\end{equation*}%
\begin{equation*}
\alpha =1,...,s,\text{ }i=1,...,2n,
\end{equation*}%
where $c_{i},$ $d_{i}$ and $h_{\alpha }$ are arbitrary constants such that $%
c_{i}$ satisfies%
\begin{equation*}
c_{1}^{2}+...+c_{2n}^{2}=4\left( 1-s\cos ^{2}\theta \right) .
\end{equation*}%
In both cases, $q\neq 0$ is a constant and $\theta $ denotes the constant
contact angle satisfying $\left\vert \cos \theta \right\vert \leq \frac{1}{%
\sqrt{s}}.$
\end{theorem}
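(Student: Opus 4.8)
The plan is to convert the intrinsic Lorentz equation $\nabla_T T = -q\varphi T$ into an explicit system of ordinary differential equations for the component functions $\gamma_1,\dots,\gamma_{2n+s}$, using the global orthonormal frame $\{X_i, X_{n+i}, \xi_\alpha\}$ and the Levi-Civita connection recorded above. First I would write the unit tangent $T=\gamma'$ in this frame; imposing that $\gamma$ be slant, i.e. $\eta^\alpha(T)=\cos\theta$ for every $\alpha$, forces the relation \eqref{slant} expressing each $\gamma_{2n+\alpha}'$ in terms of the $x$- and $y$-components, while the unit-speed hypothesis yields the normalization \eqref{unit}. These two constraints reduce the unknowns effectively to the first $2n$ coordinate functions.

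Next I would compute $\nabla_T T$ and $\varphi T$ componentwise, obtaining \eqref{2star} and \eqref{3star}. Substituting into \eqref{1star} and comparing coefficients of the frame vectors $X_1,\dots,X_{2n}$ produces the coupled second-order system \eqref{mainequation}, in which the single constant $\lambda=-q+2s\cos\theta$ governs everything; the $\xi_\alpha$-components are automatically consistent because of the slant relation. The proof then naturally splits according to whether $\lambda$ vanishes.

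For $\lambda\neq 0$, I would pair the equations in \eqref{mainequation} as $\gamma_{n+i}''/(-\gamma_i')=\gamma_i''/\gamma_{n+i}'$; each such pair integrates to the first integral $(\gamma_i')^2+(\gamma_{n+i}')^2=c_i^2$, which invites the angular substitution $\gamma_i'=c_i\cos f_i$, $\gamma_{n+i}'=c_i\sin f_i$ as in \eqref{equa1}. Feeding this back into \eqref{mainequation} gives $f_i'=-\lambda$, hence $f_i=-\lambda t+a_i$; integrating once more delivers the stated forms of $\gamma_i$ and $\gamma_{n+i}$. The $z$-coordinates then follow by substituting these into \eqref{slant} and integrating the resulting $\cos^2 f_i$ and $\cos f_i$ terms, which is where the combination $\sin(2f_i)+2f_i$ appears. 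Finally \eqref{unit} together with \eqref{equa1} gives the constraint $c_1^2+\cdots+c_n^2=4(1-s\cos^2\theta)$, completing case a).

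For $\lambda=0$ the system \eqref{mainequation} collapses to $\gamma_i''=0$ for $i=1,\dots,2n$, so these components are affine, and substituting into \eqref{slant} and integrating yields the quadratic $z$-coordinates of case b), with the normalization reading $c_1^2+\cdots+c_{2n}^2=4(1-s\cos^2\theta)$. I expect the only genuinely delicate step to be the $z$-coordinate integration in case a): one must carry the constants $c_i,d_i$ correctly through the product-to-sum identities and keep the slant relation \eqref{slant} consistent for all $\alpha$ simultaneously, so that the same $h_\alpha$-family of solutions is obtained for each index $\alpha$. Everything else is a direct verification that the exhibited curves satisfy both the slant and Lorentz conditions.
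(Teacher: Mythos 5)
Your proposal is correct and follows essentially the same route as the paper: writing $T$, $\nabla_T T$ and $\varphi T$ in the orthonormal frame $\{X_i, X_{n+i}, \xi_\alpha\}$, reducing the Lorentz equation to the system (\ref{mainequation}) governed by $\lambda=-q+2s\cos\theta$, and then splitting into the cases $\lambda\neq 0$ (first integrals $(\gamma_i')^2+(\gamma_{n+i}')^2=c_i^2$, angular substitution, integration of (\ref{slant}) for the $z$-coordinates) and $\lambda=0$ (affine horizontal components, quadratic $z$-coordinates). No substantive differences from the paper's argument.
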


In particular, if $s=1$, we obtain Theorem 3.5 in \cite{DIMN-2015}.

\textbf{Acknowledgements.} This work is financially supported by Balikesir Research Grant no. BAP 2018/016.

\end{document}